\newtheorem{theorem}{Theorem}[section]
\newtheorem{proposition}[theorem]{Proposition}
\newtheorem{corollary}[theorem]{Corollary}
\newtheorem{lemma}[theorem]{Lemma}
\theoremstyle{definition}
\newtheorem{definition}[theorem]{Definition}
\newcommand{\PP}{\mathbb{P}}
\newcommand{\QQ}{\mathbb{Q}}
\newcommand{\CC}{\mathbb{C}}
\newcommand{\cO}{\mathcal{O} }
\newcommand{\cE}{\mathcal{E} }
\newcommand{\cF}{\mathcal{F} }
\newcommand{\cN}{\mathcal{N} }
\newcommand{\cL}{\mathcal{L} }
\newcommand{\cQ}{\mathcal{Q} }
\newcommand{\cS}{\mathcal{S} }
\newcommand{\cT}{\mathcal{T} }
\newcommand{\cV}{\mathcal{V} }
\newcommand{\cU}{\mathcal{U} }
\newcommand{\cW}{\mathcal{W} }
\newcommand{\cZ}{\mathcal{Z} }
\newcommand{\rH}{\mathrm{H} }
\newcommand{\cHom}{\mathcal{H}om}
\def\Hom{\mathrm{Hom} }
\def\Ext{\mathrm{Ext} }
\def\cExt{\mathcal{E}xt }
\def\GL{\mathrm{GL}}
\def\git{/\!/ }
\def\lr{\rightarrow}
\def\bG{\mathbf{G}}
\def\bH{\mathbf{H}}
\def\bM{\mathbf{M}}
\def\bP{\mathbf{P}}
\newcommand{\ses}[3]{0\lr{#1}\lr{#2}\lr{#3}\lr 0}
\begin{document}

\title{Birational geometry of the moduli space of pure sheaves on quadric surface}

\author{Kiryong Chung}
\address{Department of Mathematics Education, Kyungpook National University, 80 Daehakro, Bukgu, Daegu 41566, Korea}
\email{krchung@knu.ac.kr}

\author{Han-Bom Moon}
\address{Department of Mathematics, Fordham University, Bronx, NY 10458}
\email{hmoon8@fordham.edu}

\keywords{Moduli space, Stable sheaf, Birational geometry, Elementary modification}
\subjclass[2010]{14E05, 14E30, 14D99.}

\begin{abstract}
We study birational geometry of the moduli space of stable sheaves on a quadric surface with Hilbert polynomial $5m + 1$ and $c_{1} = (2, 3)$. We describe a birational map between the moduli space and a projective bundle over a Grassmannian as a composition of smooth blow-ups/downs. 
\end{abstract}

\maketitle


\section{Introduction}

The geometry of the moduli space of sheaves on a del Pezzo surface has been studied in various viewpoints, for instance curve counting, the strange duality conjecture, and birational geometry via Bridgeland stability. For a detailed description of the motivation, see \cite{CM15} and references therein. In this paper we continue the study of birational geometry of the moduli space of torsion sheaves on a del Pezzo surface, which was initiated in \cite{CM15}. More precisely, here we construct a flip between the moduli space of sheaves and a projective bundle, and show that their common blown-up space is the moduli space of stable pairs (\cite{LP93b}), in the case of a quadric surface.  

Let $Q \cong \PP^1\times \PP^1$ be a smooth quadric surface in $\PP^3$ with a very ample polarization $L := \cO_Q(1,1)$. For the convenience of the reader, we start with a list of relevant moduli spaces.

\begin{definition}
\begin{enumerate}
\item Let $\bM := \bM_{L}(Q, (2,3), 5m+1)$ be the moduli space of stable sheaves $F$ on $Q$ with $c_1(F)=c_1(\cO_Q(2,3))$ and $\chi(F(m))=5m+1$.
\item Let $\bM^{\alpha} := \bM^{\alpha}_{L}(Q, (2,3), 5m+1)$ be the moduli space of $\alpha$-stable pairs $(s,F)$ with $c_1(F)=c_1(\cO_Q(2,3))$ and $\chi(F(m))=5m+1$ (\cite{LP93b} and \cite[Theorem 2.6]{He98}).
\item Let $\bG = \mathrm{Gr}(2, 4)$ and let $\bG_{1}$ be the blow-up of $\bG$ along $\PP^{1}$ (Section \ref{sub:defq}).
\item Let $\bP:=\PP(\cU)$ and $\bP^{-} := \PP(\cU^{-})$, where $\cU$ (resp. $\cU^{-}$) is a rank $10$ vector bundle over $\bG$ (resp. $\bG_1$) defined in \eqref{eqn:exactseqforQ} in Section \ref{sub:defq} (resp. Section \ref{ssec:modification}).
\end{enumerate}
\end{definition}

The aim of this paper is to explain and justify the following commutative diagram between moduli spaces. 
\[
	\xymatrix{\bM^{+} \ar[r] \ar[d]_{r} & 
	\bP^{-} = \PP(\cU^{-})\ar@{<-->}[r] 
	\ar[rd] &
	\PP(u^{*}\cU) = \bG_{1} \times_{\bG}\bP \ar[d] \ar[r] 
	& \bP = \PP(\cU) \ar[d]\\
	\bM \ar@{<-->}[ru] && \bG_{1} \ar[r]^{u} 
	& \bG &}
\]
We have to explain two flips (dashed arrows) on the diagram. 

One of key ingredients is the \emph{elementary modification} of vector bundles (\cite{Mar73}), sheaves (\cite[Section 2.B]{HuLe10}), and pairs (\cite[Section 2.2]{CC12}). It has been widely used in the study of sheaves on a smooth projective variety. Let $\cF$ be a vector bundle on a smooth projective variety $X$ and $\cQ$ be a vector bundle on a smooth divisor $Z\subset X$ with a surjective map $\cF|_{Z}\twoheadrightarrow \cQ$. The elementary modification of $\cF$ along $Z$ is the kernel of the composition
\[
\mathrm{elm}_{Z}(\cF):=\mathrm{ker}(\cF\twoheadrightarrow \cF|_{Z}\twoheadrightarrow \cQ).
\]
A similar definition is valid for sheaves and pairs, too.

On $\bG_1$, let $\cU^{-} := \mathrm{elm}_{Y_{10}}(u^{*}\cU)$ be the elementary transformation of $u^*\cU$ along a smooth divisor $Y_{10}$ (Section \ref{sub:defq}).

\begin{proposition}\label{prop:downstairmodificaiton}
Let $\bP^{-} = \PP(\cU^{-})$. The flip $\bP^{-} \dashrightarrow \PP(u^{*}\cU) = \bG_{1} \times_{\bG}\PP(\cU)$ is a composition of a blow-up and a blow-down. The blow-up center in $\bP^{-}$ (resp. $\PP(u^{*}\cU)$) is a $\PP^{1}$ (resp. $\PP^{7}$)-bundle over $Y_{10}$.
\end{proposition}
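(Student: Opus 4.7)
The plan is to execute the standard projective-bundle flip associated to the elementary modification $\cU^{-} \subset u^{*}\cU$, starting from the defining short exact sequence
$$0 \to \cU^{-} \to u^{*}\cU \to \iota_{*}\cQ \to 0$$
on $\bG_{1}$, where $\iota : Y_{10} \hookrightarrow \bG_{1}$ is the inclusion and $\cQ$ is the rank-$2$ quotient bundle on $Y_{10}$ from Section \ref{ssec:modification}. First I would restrict this sequence to $Y_{10}$; since $Y_{10}$ is a smooth divisor, $\mathrm{Tor}_{1}^{\cO_{\bG_{1}}}(\iota_{*}\cQ, \cO_{Y_{10}}) \cong \cQ \otimes N_{Y_{10}/\bG_{1}}^{\vee}$, and we obtain
$$0 \to \cQ \otimes N_{Y_{10}/\bG_{1}}^{\vee} \to \cU^{-}|_{Y_{10}} \to K \to 0,$$
where $K := \ker(u^{*}\cU|_{Y_{10}} \twoheadrightarrow \cQ)$ has rank $8$. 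Thus $\PP(\cQ \otimes N_{Y_{10}/\bG_{1}}^{\vee}) \cong \PP(\cQ) \subset \bP^{-}|_{Y_{10}}$ is a $\PP^{1}$-bundle and $\PP(K) \subset \PP(u^{*}\cU)|_{Y_{10}}$ is a $\PP^{7}$-bundle over $Y_{10}$, matching the centers stated in the proposition.

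Next, the inclusion $\cU^{-} \hookrightarrow u^{*}\cU$ induces a birational map $\varphi : \bP^{-} \dashrightarrow \PP(u^{*}\cU)$ that is an isomorphism over $\bG_{1}\setminus Y_{10}$. Its indeterminacy locus in $\bP^{-}$ is precisely $\PP(\cQ \otimes N_{Y_{10}/\bG_{1}}^{\vee})$, since this is the locus where the fiberwise map $\cU^{-}|_{Y_{10}} \to u^{*}\cU|_{Y_{10}}$ has nontrivial kernel. I would let $W$ denote the blow-up of $\bP^{-}$ along $\PP(\cQ \otimes N_{Y_{10}/\bG_{1}}^{\vee})$; by the universal property of blow-ups, $\varphi$ extends to a morphism $\pi : W \to \PP(u^{*}\cU)$, and the task reduces to showing that $\pi$ is itself the blow-up of $\PP(u^{*}\cU)$ along $\PP(K)$.

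To verify this I would work locally. On an affine open $V \subset \bG_{1}$ where $Y_{10} = \{t=0\}$ and $u^{*}\cU|_{V} \cong \cO_{V}^{8} \oplus \cO_{V}^{2}$ splits compatibly (with $\cQ$ the second factor), the modification $\cU^{-}|_{V}$ is freely generated by the first factor together with $t$ times the second, and in fiber projective coordinates $\varphi$ takes the standard form
$$[x_{1}:\cdots:x_{8}:y_{1}:y_{2}] \longmapsto [x_{1}:\cdots:x_{8}:ty_{1}:ty_{2}].$$
A direct blow-up calculation then shows that blowing up $\bP^{-}|_{V}$ along $\{t=0,\ x_{1}=\cdots=x_{8}=0\}$ and blowing up $\PP(u^{*}\cU)|_{V}$ along $\{t=0,\ y_{1}=y_{2}=0\}$ produce canonically isomorphic smooth varieties, with common exceptional divisor isomorphic to $\PP(\cQ \otimes N^{\vee}_{Y_{10}/\bG_{1}}) \times_{Y_{10}} \PP(K)$. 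The main technical obstacle is this local identification and verifying that the isomorphisms glue consistently over $\bG_{1}$. Once done, smoothness of the blow-up centers (as projective bundles over the smooth $Y_{10}$) yields the desired smooth blow-up/blow-down decomposition.
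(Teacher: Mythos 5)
Your overall strategy---restricting the defining sequence of the elementary modification to $Y_{10}$, reading off the two centers, and checking the blow-up/blow-down structure in local coordinates---is a legitimate hands-on substitute for the paper's argument, which instead (i) proves that $\cU^{-}=\pi_{1*}\cV^{-}$ really is the elementary modification of $u^{*}\cU$ along $Y_{10}$ with quotient $\pi_{1*}\cQ$ (this requires the vanishing $R^{1}\pi_{1*}\cV^{-}=0$, a step you skip entirely) and then (ii) quotes Maruyama's Theorem 1.3 as a black box. The problem is that your input data are wrong. The sheaf $\cQ$ of Section \ref{ssec:modification} lives on $Y_{10}\times Q$ and restricts to $\cO_{Q}(1,3)$ on each fiber; the quotient that actually appears in the elementary modification of $u^{*}\cU$ on $\bG_{1}$ is the pushforward $\pi_{1*}\cQ$, whose rank is $\dim\rH^{0}(\cO_{Q}(1,3))=8$, not $2$ (the rank-$2$ piece is $\pi_{1*}\cS$, with $\cS$ restricting to $\cO_{\ell}(1)$). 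Feeding the correct ranks into your own argument reverses your conclusion: the kernel of $\cU^{-}|_{Y_{10}}\to u^{*}\cU|_{Y_{10}}$, i.e.\ the indeterminacy locus in $\bP^{-}$, is $\PP(\pi_{1*}\cQ\otimes N^{\vee}_{Y_{10}/\bG_{1}})$, of fiber dimension $7$, while the locus on the other side is $\PP(\pi_{1*}\cS)$, of fiber dimension $1$. So your computation only appears to reproduce the stated assignment of centers because of a rank error that happens to compensate; you need to go back to the construction of $\cV^{-}$, establish the descent $\cU^{-}\cong\pi_{1*}\cV^{-}$, and recompute which side carries which center (and note that which projective bundle is called the ``$\PP^{1}$-bundle'' also depends on whether $\PP(-)$ denotes lines or quotients, so this must be pinned down against the convention used in the proof of Proposition \ref{prop:mainprop}).

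There is a second, structural error. You assert that the two blow-ups produce isomorphic varieties ``with common exceptional divisor isomorphic to $\PP(\cQ\otimes N^{\vee}_{Y_{10}/\bG_{1}})\times_{Y_{10}}\PP(K)$.'' That fiber product has codimension $2$ in the common resolution $W$, so it cannot be an exceptional divisor of either blow-up; you are conflating this situation with a Mukai-type flop, where a single divisor is contracted in two different directions. In the local model $[x:y]\mapsto[x:ty]$ the two maps $W\to\bP^{-}$ and $W\to\PP(u^{*}\cU)$ contract two \emph{different} divisors: the exceptional divisor of $W\to\PP(u^{*}\cU)$ is the proper transform of the fiber divisor $\bP^{-}|_{Y_{10}}$ (which, after blowing up the center inside it, fibers in projective spaces over the center on the other side), and symmetrically for $W\to\bP^{-}$. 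The local verification you defer would fail at exactly this point as currently set up, so this is not merely a gluing issue left to the reader.
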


\begin{theorem}\label{thm:mainpropintro}
There is a flip between $\bM$ and $\bP^{-}$ which is a blow-up followed by a blow-down, and the master space is $\bM^{+}$, the moduli space of $+$-stable pairs. 
\end{theorem}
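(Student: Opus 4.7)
The plan is to realize $\bM^{+}$ as a master space for a wall-crossing of Le Potier $\alpha$-stable pairs, and then identify the two contractions $\bM^{+}\to\bM$ and $\bM^{+}\to\bP^{-}$ as smooth blow-ups built out of elementary modifications. The underlying mechanism is that as $\alpha$ varies, the moduli $\bM^{\alpha}$ changes only across a finite set of walls (cf.\ \cite{LP93b}, \cite{He98}), at each of which the strictly semistable pairs admit a Jordan--H\"older filtration $0\to (s,F_{1})\to (s,F)\to (0,F_{2})\to 0$. Away from walls the moduli is smooth, and at the outermost wall the crossing takes the form of a projective bundle blow-up followed by a blow-down, whose combinatorics is governed by the relevant $\Ext^{1}$ groups.

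First I would enumerate the walls for the numerical data $c_{1}=(2,3)$, $\chi(F(m))=5m+1$. Positivity of the quotient Hilbert polynomials, together with the restricted list of divisor types on $\PP^{1}\times\PP^{1}$, bounds the possible destabilizing sub-pairs to a finite and manageable list; the largest wall corresponds to the most generic splitting, while a smaller wall produces the second flip. Pinning down this wall simultaneously locates the flip locus in $\bM$ (where strict semistability occurs) and, via the construction of $\cU^{-}$ in Section~\ref{ssec:modification}, the corresponding flip locus in $\bP^{-}$, since both are built from the same $\Ext^{1}$-data.

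Next I would analyze the forgetful morphism $r\colon\bM^{+}\to\bM$, $(s,F)\mapsto F$. Over the open locus of stable sheaves $F$ with $h^{0}(F)=1$ --- which is generic because $\chi(F)=1$ --- the map is an isomorphism. The exceptional divisor is the locus where $h^{0}(F)\ge 2$, and a fiberwise analysis shows it is a $\PP^{k}$-bundle over a smooth center inside $\bM$ determined by the semistable filtration from Step~1. Deformation theory for pairs then identifies the conormal sheaf with the tautological bundle of this projective bundle, exhibiting $r$ as a smooth blow-down in the sense of Fujiki--Nakano.

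The analogous analysis for the other contraction $\bM^{+}\to\bP^{-}$ is the main obstacle. I would construct it by performing an elementary modification of the universal pair on $\bM^{+}$ along the exceptional divisor of $r$, producing a family on $\bM^{+}$ of objects classified by $\bP^{-}$; this mirrors the passage from $u^{*}\cU$ to $\cU^{-}$ on $\bG_{1}$ in Proposition~\ref{prop:downstairmodificaiton}. The commutativity of the resulting square is automatic once the two elementary modifications use the same $\Ext^{1}$-extension class. The chief technical difficulty will be verifying that this map is everywhere regular (not merely birational) and that the blow-up center in $\bP^{-}$ is smooth of the expected dimension; this will require an explicit $\Ext$ computation at the strictly $\alpha^{+}$-semistable points to control the obstruction spaces, together with a dimension count matching the two projective bundle structures across the wall.
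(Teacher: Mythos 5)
Your overall shape --- master space $\bM^{+}$ with two contractions, the forgetful map $r$ identified as a smooth blow-up along the Brill--Noether locus $\bM_{2}$ (this part matches the paper, which quotes Maican for it), and an elementary modification of the universal pair to produce the second contraction --- is broadly right, but there are two genuine gaps. First, the framing via $\alpha$-wall-crossing is off-target: the flip of the theorem is \emph{not} a wall-crossing in $\alpha$. The $\alpha$-wall-crossing for these invariants relates $\bM^{\infty}$ and $\bM^{+}$ and is a different, single flip over a different center ($\bM_{3}$, already analyzed by Maican); neither $\bM$ nor $\bP^{-}$ is a pair moduli space for any value of $\alpha$ --- $\bP^{-}$ is a projective bundle over a blown-up Grassmannian of Kronecker quiver representations. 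Enumerating walls in $\alpha$ therefore will not locate the flip locus you need, which is the Brill--Noether divisor $\bM_{2}^{+}\cong\PP^{11}\times\PP^{1}$ over $\bM_{2}=\{F:\dim\rH^{0}(F)=2\}$.

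Second, and more seriously, you have no mechanism connecting a pair $(s,F)$ on $Q$ to a point of $\PP(\cU)$. The paper's essential construction is a fiberwise duality: let $G$ be the extension $\ses{\cO_{Q}}{G}{F^{D}(2,2)}$ classified by $s\in\rH^{0}(F)\cong\Ext^{1}(F^{D}(2,2),\cO_{Q})$; generically $G\cong I_{Z,Q}(2,3)$ admits the Kronecker resolution $\cO_{Q}(0,1)\to\cO_{Q}(1,2)^{\oplus 2}$ and hence determines a point of $\bG$ together with a point of the fiber of $\PP(\cU)$. Without this step no map to $\bP$ or $\bP^{-}$ can be written down. Moreover, that map fails along $\bM_{1}^{+}$ (supports containing a $(1,0)$-line, where $\Hom(\cO_{Q}(1,3),G)\neq 0$) for a reason that cannot be repaired by modifying the family on $\bM^{+}$ alone: one must modify the \emph{target}, blowing up $\bG$ along the $\PP^{1}$ of $(1,0)$-lines and replacing $u^{*}\cU$ by $\cU^{-}$. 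Even then the paper does not extend a single morphism $\bM^{+}\to\bP^{-}$ over $\bM_{1}^{+}$; it first shows $q:\bM^{+}\setminus\bM_{1}^{+}\to\bP$ is a smooth blow-down of $\bM_{2}^{+}$ (via $\cN_{\bM_{2}^{+}/\bM^{+}}\cong\cO_{\PP^{11}\times\PP^{1}}(-1,-1)$ and Fujiki--Nakano), defines $\bM^{-}$ abstractly as the contraction of the $\PP^{11}$-fibers, and then proves $\bP^{-}\cong\bM^{-}$ by gluing two birational maps --- one over $\bG_{1}\setminus Y_{10}$ and one over $Y_{10}$, using that $\cV^{-}$ restricts there to twisted ideal sheaves of two points on $(1,0)$-lines --- and comparing Picard numbers of two smooth varieties. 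Your plan of verifying that one map is ``everywhere regular'' leaves precisely this obstruction at $\bM_{1}^{+}$ unresolved.
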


As applications, we compute the Poincar\'e polynomial of $\bM$ and show the rationality of $\bM$ (Corollary \ref{coro}) which were obtained by Maican by different methods (\cite{Mai16}). Since each step of the birational transform is described in terms of blow-ups/downs along explicit subvarieties, in principle the cohomology ring and the Chow ring of $\bM$ can be obtained from that of $\bG$. Also one may aim for the completion of Mori's program for $\bM$. We will carry on these projects in forthcoming papers. 



\section{Relevant moduli spaces}\label{sec:relevantmoduli}

In this section we give definitions and basic properties of some relevant moduli spaces. 

\subsection{Grassmannian as a moduli space of Kronecker quiver representations}\label{sub:defq} 

The moduli space of representations of a Kronecker quiver parametrizes the isomorphism classes of stable sheaf homomorphisms 
\begin{equation}\label{res1}
\cO_Q(0,1)\longrightarrow \cO_Q(1,2)^{\oplus 2}
\end{equation}
up to the natural action of the automorphism group $\CC^{*}\times \GL_{2}/\CC^{*} \cong \GL_{2}$. For two vector spaces $E$ and $F$ of dimension $1$ and $2$ respectively and $V^{*} := \rH^{0}(Q, L)$, the moduli space is constructed as $\bG := \Hom(F, V^{*}\otimes E)\git \GL_{2} \cong V^{*}\otimes E \otimes F^{*}\git \GL_{2}$ with an appropriate linearization (\cite{Kin94}). Note that the $\GL_{2}$ acts as a row operation on the space of $2 \times 4$ matrices, $\bG \cong\mathrm{Gr}(2, 4)$. 

Let $\bH(n):=\mathrm{Hilb}^n(Q)$, the Hilbert scheme of $n$ points on $Q$. $\bH(2)$ is birational to $\bG$ because a general $Z \in \bH(2)$, $I_{Z}(2, 3)$ has a resolution of the form \eqref{res1}. For any $Z \in \bH(2)$, let $\ell_{Z}$ be the unique line in $\PP^{3} \supset Q$ containing $Z$. Then either $\ell_{Z} \cap Q = Z$ or $\ell_{Z} \subset Q$. In the second case, the class of $\ell_{Z}$ is of the type $(1, 0)$ or $(0, 1)$. Let $Y_{10}$ (resp. $Y_{01}$) be the locus of subschemes such that $\ell_{Z}$ is a line of the type $(1, 0)$ (resp. $(0, 1)$). Then $Y_{10}$ and $Y_{01}$ are two disjoint subvarieties which are isomorphic to a $\PP^{2}$-bundle over $\PP^{1}$. 

\begin{proposition}[\protect{\cite[Example 6.1]{BC13}}]\label{prop:hilbcon}
There exists a morphism $t:\bH(2)\longrightarrow \bG_1 \stackrel{u}\longrightarrow \bG$. The first (resp. the second) map contracts the divisor $Y_{01}$ (resp. $Y_{10}$) to $\PP^{1}$. If $\ell_{Z} \cap Q = Z$, then $t(Z) = I_{Z}(2, 3)$. If $Z \in Y_{10}$, then $t(Z) = E_{10} \in \PP(\Ext^{1}(\cO_{Q}(1, 3), \cO_{\ell_{Z}}(1))) = \{\mathrm{pt}\}$. If $Z \in Y_{01}$, then $t(Z) = E_{01} \in \PP(\Ext^{1}(\cO_{Q}(2, 2), \cO_{\ell_{Z}})) = \{\mathrm{pt}\}$. 
\end{proposition}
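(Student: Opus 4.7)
The plan is to build $t$ as the classifying map for a rank-$2$ bundle on $\bH(2)$. Letting $\cI$ be the universal ideal sheaf on $\bH(2)\times Q$ and $\pi_{1},\pi_{2}$ the projections, I would first check that $h^{0}(Q, I_{Z}(1,1)) = 2$ and $h^{1}(Q, I_{Z}(1,1)) = 0$ for every $Z \in \bH(2)$: this is forced by $\chi(I_{Z}(1,1)) = 4-2 = 2$ together with the fact that $L$ embeds $Q \hookrightarrow \PP^{3}$ and hence separates any length-$2$ subscheme. Cohomology and base change then turn $R^{0}\pi_{1*}(\cI\otimes \pi_{2}^{*}L)$ into a rank-$2$ subbundle of $V^{*}\otimes \cO_{\bH(2)}$, producing a morphism $t\colon \bH(2)\to \mathrm{Gr}(2,V^{*}) = \bG$ sending $Z$ to $H^{0}(Q, I_{Z}\otimes L)$. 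To match this with the Kronecker picture on $U = \{Z : \ell_{Z}\cap Q = Z\}$, I would take two generators $s_{1},s_{2}$ of $H^{0}(I_{Z}\otimes L)$ and view them as the components of a map $\cO_{Q}(0,1)\to \cO_{Q}(1,2)^{\oplus 2}$; since $Z$ is the base locus of the pencil $\langle s_{1},s_{2}\rangle$ on $Q$, a Koszul-type computation identifies the cokernel with $I_{Z}(2,3)$, confirming $t(Z) = I_{Z}(2,3)$.

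Next I would analyse $t$ on $Y_{10}$ and $Y_{01}$. For $Z \in Y_{10}$, the restriction $L|_{\ell_{Z}}$ has degree $1$ on $\ell_{Z} \cong \PP^{1}$, so any section of $L$ vanishing on the length-$2$ scheme $Z \subset \ell_{Z}$ is forced to vanish identically on $\ell_{Z}$; hence $H^{0}(I_{Z}\otimes L) = H^{0}(I_{\ell_{Z}}\otimes L)$, depending only on $\ell_{Z}$. Thus $t(Y_{10})$ lies in the $\PP^{1}$ of $(1,0)$-rulings in $\bG$ and, by the same argument, $t(Y_{01})$ lies in the disjoint $\PP^{1}$ of $(0,1)$-rulings. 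To identify the image sheaf $E_{10}$, I would write $s_{i} = f\cdot s_{i}'$ with $f \in H^{0}(\cO_{Q}(1,0))$ cutting out $\ell_{Z}$ and $s_{i}' \in H^{0}(\cO_{Q}(0,1))$, and apply the snake lemma to the factorisation
\[
\cO_{Q}(0,1) \xrightarrow{(s_{1}',s_{2}')} \cO_{Q}(0,2)^{\oplus 2} \xrightarrow{\,f\,} \cO_{Q}(1,2)^{\oplus 2}
\]
to extract the extension $0 \to \cO_{\ell_{Z}}(1) \to E_{10} \to \cO_{Q}(1,3) \to 0$. Uniqueness up to scalar follows from the computation $\Ext^{1}(\cO_{Q}(1,3), \cO_{\ell_{Z}}(1)) = \rH^{1}(\PP^{1},\cO(-2)) = \CC$, and the analogous argument handles $E_{01}$ on $Y_{01}$.

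Finally, to factor $t$ as $\bH(2) \to \bG_{1} \xrightarrow{u} \bG$, I would invoke the universal property of the blow-up $u$ of the $(1,0)$-ruling $\PP^{1}$: because the preimage of that $\PP^{1}$ under $t$ is the Cartier divisor $Y_{10}$ in the smooth variety $\bH(2)$, the ideal sheaf pulls back to an invertible sheaf, so $t$ factors uniquely through $\bG_{1}$. A local computation comparing the normal bundle of the $(1,0)$-ruling $\PP^{1}$ in $\bG$ (rank $3$, so the $u$-exceptional divisor is a $\PP^{2}$-bundle) with the projection $Y_{10}\to \PP^{1}$ (also a $\PP^{2}$-bundle) will show that $\bH(2) \to \bG_{1}$ is an isomorphism on $Y_{10}$ onto the exceptional divisor and contracts only $Y_{01}$ onto the untouched $(0,1)$-ruling $\PP^{1} \subset \bG_{1}$. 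The step I expect to be most delicate is the snake-lemma bookkeeping in the second paragraph that pins $E_{10}$ and $E_{01}$ down as the specific extensions in the statement; everything else is routine cohomology and base change, Koszul resolutions on $Q \cong \PP^{1}\times\PP^{1}$, and the universal property of blow-ups.
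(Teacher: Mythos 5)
First, a point of reference: the paper does not prove this proposition at all --- it is imported verbatim from \cite[Example 6.1]{BC13} --- so your reconstruction can only be measured against the statement itself. Your overall architecture is the right one: define $t$ by $Z\mapsto \rH^0(I_Z(1,1))\in\mathrm{Gr}(2,V^*)$ after checking $h^0=2$, $h^1=0$ (correct, via $H^0(\cO_{\PP^3}(1))\cong V^*$ separating length-$2$ schemes), identify the cokernel of the associated Kronecker map with $I_Z(2,3)$ by the Koszul complex of the pencil when $\ell_Z\not\subset Q$, observe that on $Y_{10}\sqcup Y_{01}$ the subspace $\rH^0(I_Z(1,1))=f\cdot \rH^0(\cO_Q(0,1))$ depends only on $\ell_Z$, and then factor through the blow-up. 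All of that is sound.

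There is, however, a concrete error in exactly the step you flagged as delicate. The factorization you wrote,
\[
\cO_Q(0,1)\xrightarrow{(s_1',s_2')}\cO_Q(0,2)^{\oplus 2}\xrightarrow{\;f\;}\cO_Q(1,2)^{\oplus 2},
\]
does yield a short exact sequence on cokernels, but it is
\[
\ses{\cO_Q(0,3)}{E_{10}}{\cO_{\ell_Z}(2)^{\oplus 2}},
\]
because $\mathrm{coker}\bigl((s_1',s_2')\colon \cO_Q(0,1)\to\cO_Q(0,2)^{\oplus 2}\bigr)\cong\cO_Q(0,3)$ by Koszul and $\mathrm{coker}(f)\cong \cO_Q(1,2)^{\oplus 2}|_{\ell_Z}\cong\cO_{\ell_Z}(2)^{\oplus 2}$. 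That is a true statement about $E_{10}$, but it is not the extension in the proposition. To extract $\ses{\cO_{\ell_Z}(1)}{E_{10}}{\cO_Q(1,3)}$ you must factor in the other order, $\cO_Q(0,1)\xrightarrow{\,f\,}\cO_Q(1,1)\xrightarrow{(s_1',s_2')}\cO_Q(1,2)^{\oplus 2}$, where $\mathrm{coker}(f)=\cO_Q(1,1)|_{\ell_Z}=\cO_{\ell_Z}(1)$ and the Koszul complex of the base-point-free pencil $(s_1',s_2')$ gives $\mathrm{coker}=\cO_Q(1,3)$. Two further soft spots: (i) the computation $\Ext^1(\cO_Q(1,3),\cO_{\ell_Z}(1))\cong\CC$ only shows the non-split class is unique; you still need to check $E_{10}$ is non-split, e.g.\ $\Hom(E_{10},\cO_{\ell_Z}(1))=0$ since $\Hom(\cO_Q(1,2),\cO_{\ell_Z}(1))=\rH^0(\cO_{\PP^1}(-1))=0$; (ii) the universal property of the blow-up requires the \emph{scheme-theoretic} inverse image ideal of the $(1,0)$-ruling to be invertible --- knowing only that the set-theoretic preimage is the divisor $Y_{10}$ in a smooth variety does not suffice (an ideal with divisorial zero locus need not be principal), so the ``local computation'' you defer to the end is in fact also carrying the weight of the factorization step.
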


There is a \emph{universal morphism} $\phi : p_{1}^{*}\cF \otimes p_{2}^{*}\cO_{Q}(0,1)\to p_{1}^{*}\cE \otimes p_{2}^{*}\cO_{Q}(1,2)$ where $p_{1} : \bG \times Q \to \bG$ and $p_{2} : \bG \times Q \to Q$ are two projections (\cite{Kin94}). Let $\cU$ be the cokernel of $p_{1 *}\phi$. On the stable locus, $p_{1 *}\phi$ is injective. Thus we have an exact sequence
\begin{equation}\label{eqn:exactseqforQ}
	0 \to \cF \otimes \rH^{0}(\cO_{Q}(0,1)) \to\cE \otimes\rH^{0}(\cO_{Q}(1,2)) \to \cU \to 0
\end{equation}
and $\cU$ is a rank $10$ vector bundle. Let $\bP := \PP(\cU)$. 

\subsection{Moduli space $\bM$ of stable sheaves}\label{ssec:modsheaves}
Recall that $\bM := \bM_{L}(Q, (2, 3), 5m+1)$ is the moduli space of stable sheaves $F$ on $Q$ with $c_{1}(F) = c_{1}(\cO_{Q}(2, 3))$ and $\chi(F(m)) = 5m + 1$. There are four types of points in $\bM$ (\cite[Theorem 1.1]{Mai16}). Let $C \in |\cO_Q(2,3)|$.
\begin{enumerate}
\item[(0)] $F = \cO_{C}(p+q)$, where the line $\langle p, q\rangle$ is not contained in $Q$;
\item $F = \cO_{C}(p+q)$, where the line $\langle p,q\rangle$ in $Q$ is of type $(1,0)$;
\item $F=\cO_C(0,1)$;
\item $F$ fits into a non-split extension $0 \to \cO_{E} \to F \to \cO_{\ell} \to 0$ where $E$ is a $(2,2)$-curve and $\ell$ is a $(0, 1)$-line.
\end{enumerate}

Let $\bM_{i}$ be the locus of sheaves of the form ($i$). Then $\bM_{i}$ is a subvariety of codimension $i$. $\bM_{1}$ is a $\PP^{9}$-bundle over $\PP^{2} \times \PP^{1}$. $\bM_{2}$ is isomorphic to $|\cO_{Q}(2, 3)|$. Finally, $\bM_{3}$ is a $\PP^{1}$-bundle over $|\cO_{Q}(2, 2)| \times |\cO_{Q}(0, 1)|$. $\bM_{1} \cap \bM_{2}=\bM_{1} \cap \bM_{3}= \emptyset$, but $\bM_{23} := \bM_{2} \cap \bM_{3} \cong |\cO_{Q}(2, 2)| \times |\cO_{Q}(0, 1)|$ (\cite[Theorem 1.1]{Mai16}). Note that $\dim \rH^{0}(F) = 1$ in general, but $\bM_{2}$ parametrizes sheaves that $\dim \rH^{0}(F) = 2$. 

\subsection{Moduli spaces of stable pairs}\label{ssec:moduliofpairs}
A pair $(s, F)$ consists of $F \in \textsf{Coh}(Q)$ and a section $\cO_{Q} \stackrel{s}{\to} F$. Fix $\alpha \in \QQ_{> 0}$. A pair $(s, F)$ is called \emph{$\alpha$-semistable} (resp. \emph{$\alpha$-stable}) if $F$ is pure and for any proper subsheaf $F'\subset  F$, the inequality
\[
	\frac{P(F')(m)+\delta\cdot\alpha}{r(F')} \le (<) 
	\frac{P(F)(m))+\alpha}{r(F)}
\]
holds for $m\gg 0$. Here $\delta=1$ if the section $s$ factors through $F'$ and $\delta=0$ otherwise. Let $\bM^{\alpha} := \bM_{L}^{\alpha}(Q, (2,3), 5m+1)$ be the moduli space of $S$-equivalence classes of $\alpha$-semistable pairs whose support have a class $c_{1}(\cO_{Q}(2, 3))$ (\cite[Theorem 4.12]{LP93b} and \cite[Theorem 2.6]{He98}). The extremal case that $\alpha$ is sufficiently large (resp. small) is denoted by $\alpha=\infty$ (resp. $\alpha = +$). The deformation theory of pairs is studied in \cite[Corollary 1.6 and Corollary 3.6]{He98}.
\begin{proposition}\label{prop:stablepairs}
\begin{enumerate}
\item There exists a natural forgetful map $r:\bM^{+}\longrightarrow \bM$ which maps $(s,F)$ to $F$.
\item (\cite[Section 4.4]{He98}) The moduli space $\bM^{\infty}$ of $\infty$-stable pairs is isomorphic to the relative Hilbert scheme of two points on the complete linear system $|\cO_Q(2,3)|$.
\end{enumerate}
\end{proposition}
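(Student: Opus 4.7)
The plan is to treat the two statements separately, since they concern opposite extremes of the $\alpha$-stability parameter.

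For part (1), the strategy is to show that any $+$-stable pair $(s, F)$ has $F$ stable as a sheaf, and then invoke the universal property of $\bM$. Given a proper nonzero subsheaf $F' \subsetneq F$ of the same Gieseker dimension as $F$, the $\alpha$-stability inequality applied for all sufficiently small $\alpha > 0$ yields, in the limit $\alpha \to 0^{+}$, the reduced Hilbert polynomial inequality $p(F') \leq p(F)$. Hence $F$ is Gieseker semistable. Moreover, writing $p(F) = m + 1/5$, any strictly semistable proper subsheaf would need multiplicity divisible by $5$, which is impossible since the total multiplicity of $F$ is $5$. So $F$ is stable, and the assignment $(s, F) \mapsto F$ lands set-theoretically in $\bM$. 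The morphism structure of $r$ then follows from the universal property of the coarse moduli space: the underlying sheaf of the universal pair on $\bM^{+} \times Q$ is flat over $\bM^{+}$ with stable fibers, so it induces a morphism $r : \bM^{+} \to \bM$.

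For part (2), the plan is to unpack the $\alpha$-stability condition in the limit $\alpha \gg 0$ and match the resulting objects with the relative Hilbert scheme. Taking $\alpha$ large, the inequality forces (i) $F$ pure of dimension one, (ii) the section $s : \cO_{Q} \to F$ injective, and (iii) the cokernel of $s$ supported in dimension zero. These are precisely the Pandharipande--Thomas stable pair conditions on $Q$. The scheme-theoretic support of $F$ is a curve $C \in |\cO_{Q}(2,3)|$; using $\chi(\cO_{C}) = \chi(\cO_{Q}) - \chi(\cO_{Q}(-2,-3)) = 1 - 2 = -1$, the length of $\mathrm{coker}(s)$ equals $\chi(F) - \chi(\cO_{C}) = 1 - (-1) = 2$, giving a length-$2$ subscheme $Z \subset C$. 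Conversely, given $(C, Z)$, applying $\cHom_{\cO_{C}}(-, \cO_{C})$ to $0 \to I_{Z/C} \to \cO_{C} \to \cO_{Z} \to 0$ produces $F := \cHom_{\cO_{C}}(I_{Z/C}, \cO_{C})$ equipped with a canonical section $\cO_{Q} \twoheadrightarrow \cO_{C} \hookrightarrow F$, which forms an $\infty$-stable pair with the correct Hilbert polynomial. Performing this construction in families gives the isomorphism with the relative Hilbert scheme of two points on $|\cO_{Q}(2,3)|$.

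The main technical hurdle is upgrading the set-theoretic bijection in part (2) to a scheme-theoretic isomorphism. This requires constructing a universal $\infty$-stable pair over the relative Hilbert scheme, verifying its flatness, and then invoking the universal property of $\bM^{\infty}$ from \cite[Theorem 4.12]{LP93b}; in our situation this is precisely what is carried out in \cite[Section 4.4]{He98}, so we may appeal directly to that reference. For part (1) the only subtlety is the flatness of the underlying sheaf in a flat family of $+$-stable pairs, which follows immediately from the definition of a pair as a flat family together with a global section.
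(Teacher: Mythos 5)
Your proposal is correct and takes essentially the same route as the paper, which in fact offers no proof of this proposition at all and simply cites \cite[Theorem 4.12]{LP93b} and \cite[Section 4.4]{He98}; your limit argument $\alpha \to 0^{+}$ combined with the coprimality of $\chi(F)=1$ and the multiplicity $5$ for part (1), and the Pandharipande--Thomas-style characterization of $\infty$-stability with the Euler characteristic count $\chi(\cO_C)=-1$ for part (2), is exactly the standard content behind those citations. The one genuinely technical point --- upgrading the pointwise correspondence $(C,Z)\mapsto \cHom_{\cO_C}(I_{Z/C},\cO_C)$ to a family-level isomorphism --- you correctly defer to \cite[Section 4.4]{He98}, which is the same reference the paper relies on.
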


The birational map $\bM^{\infty} \dashrightarrow \bM^{+}$ is analyzed in \cite[Theorem 5.7]{Mai16}. It turns out that this is a single flip over $\bM^4$ and is a composition of a smooth blow-up and a smooth blow-down. The blow-up center $\bM_3^{\infty}$ is isomorphic to a $\PP^2$-bundle over $|\cO_{Q}(2,2)|\times |\cO_{Q}(0,1)|$ where a fiber $\PP^2$ parameterizes two points lying on a $(0, 1)$-line. After the flip, the flipped locus on $\bM^{+}$ is $\bM_{3}^{+}$. 

For the forgetful map $r : \bM^{+} \to \bM$, we define $\bM_{i}^{+} := r^{-1}(\bM_{i})$ if $i \ne 3$ and $\bM_{3}^{+}$ is the proper transform of $\bM_{3}$. It contracts $\bM_{2}^+$, which is a $\PP^{1}$-bundle over $\bM_{2}$ and $\bM^{+}\setminus \bM_{2}^{+} \cong \bM \setminus \bM_{2}$. Maican proved that $r$ is a smooth blow-up along the Brill-Noether locus $\bM_{2}$ (\cite[Proposition 5.8]{Mai16}).

\section{Decomposition of the birational map between $\bM$ and $\bP$}\label{sec:masterspace}

In this section we prove Proposition \ref{prop:downstairmodificaiton} and Theorem \ref{thm:mainpropintro} by describing the birational map between $\bM$ and $\bP$. 

\subsection{Construction of a birational map $\bM^+\dashrightarrow \bP$}

\begin{lemma}\label{lem:rigidmap}
There exists a surjective morphism $w: \bM^+\longrightarrow \bG$ which maps $(s,\cO_{C}(p+q)) \in \bM_{0}^{+}$ to $I_{\{p, q\}}(2, 3)$, maps $(s, \cO_{C}(p+q)) \in \bM_{1}^{+}$ to the line $\langle p, q\rangle$ of the type $(1, 0)$, maps $(s, F) \in \bM_{2}^{+}$ to a $(0, 1)$-line determined by a section, and maps $(s, F) \in \bM_{3}^{+}$ to $\ell$ (see Section \ref{ssec:modsheaves} for the notation), a $(0, 1)$-line. 
\end{lemma}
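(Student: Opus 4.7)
The plan is to construct $w$ uniformly as a morphism via a cohomological functor applied to the cokernel of the section. For a pair $(s, F) \in \bM^+$, set $\cN := \operatorname{coker}(s \colon \cO_Q \to F)$ and let $W := \operatorname{supp}(\cN)$ denote the scheme-theoretic support. Case-by-case using the classification in Section \ref{ssec:modsheaves}, I would verify that $h^{0}(I_W(1,1)) = 2$ and $h^{1}(I_W(1,1)) = 0$ on every stratum, so that $H^{0}(I_W(1,1)) \subset V^{*} = H^{0}(\cO_Q(1,1))$ defines a two-dimensional subspace, hence a point of $\bG = \operatorname{Gr}(2, V^{*}) = \operatorname{Gr}(2, 4)$. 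Relativizing via the universal pair on $\bM^{+} \times Q$, the uniform cohomology implies by cohomology and base change that this fiberwise construction assembles into a rank-two subbundle of $V^{*} \otimes \cO_{\bM^{+}}$, giving the morphism $w \colon \bM^{+} \to \bG$.

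The identification on each stratum is then read off from $W$. On $\bM_0^+$ with $F = \cO_C(p+q)$, the factorization $\cO_Q \twoheadrightarrow \cO_C \hookrightarrow \cO_C(p+q)$, in which the second arrow is multiplication by a section of $\cO_C(p+q)$ vanishing at $p+q$, yields $\cN = \cO_{\{p,q\}}$ and $W = \{p,q\}$, so the two-plane is $H^{0}(I_{\{p,q\}}(1,1))$, which by the Beilinson-type resolution $0 \to \cO_Q(0,1) \to \cO_Q(1,2)^{\oplus 2} \to I_{\{p,q\}}(2,3) \to 0$ is precisely the Kronecker point $I_{\{p,q\}}(2,3) \in \bG$. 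On $\bM_1^+$ the same description gives $\cN = \cO_{\{p,q\}}$ with $\{p,q\}$ on a $(1,0)$-line $\ell'$; since $\cO_Q(1,1)|_{\ell'} \cong \cO_{\PP^{1}}(1)$ cannot vanish at two distinct points, every element of $H^{0}(I_{\{p,q\}}(1,1))$ vanishes on all of $\ell'$, so the two-plane collapses to $H^{0}(I_{\ell'}(1,1)) = H^{0}(\cO_Q(0,1))$, depending only on $\ell'$ and consistent with the contraction of $Y_{10}$ under $u$ in Proposition \ref{prop:hilbcon}. On $\bM_2^+$ with $F = \cO_C(0,1)$, the vanishing $H^{1}(\cO_Q(-2,-2)) = 0$ yields $H^{0}(\cO_C(0,1)) = H^{0}(\cO_Q(0,1))$, so $s$ canonically determines a $(0,1)$-line $\ell''$ and $\cN = \cO_{\ell'' \cap C}$ is a length-two scheme on $\ell''$, giving the two-plane $H^{0}(I_{\ell''}(1,1)) = H^{0}(\cO_Q(1,0))$ by the analogous argument. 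Finally on $\bM_3^+$, in the non-split extension $0 \to \cO_E \to F \to \cO_\ell \to 0$ the boundary $H^{0}(\cO_\ell) \to H^{1}(\cO_E)$ is an isomorphism of one-dimensional spaces (since $E$ is a $(2,2)$-curve of arithmetic genus one), so the unique section $s$ factors through $\cO_E$, giving $\cN = \cO_\ell$ and $W = \ell$; the two-plane is $H^{0}(I_\ell(1,1)) = H^{0}(\cO_Q(1,0))$, identified with the $(0,1)$-line $\ell$ as a point of $\bG$.

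The main technical hurdle is establishing the uniform constancy of $h^{0}(I_W(1,1)) = 2$ and $h^{1}(I_W(1,1)) = 0$ across all four strata (together with the compatibility of the relative scheme-theoretic support with base change), since this is what upgrades the pointwise construction to an honest morphism via cohomology and base change; the subtlety is that $W$ jumps dimension between $\bM_0^+ \cup \bM_1^+ \cup \bM_2^+$ and $\bM_3^+$, but the cohomology of $I_W(1,1)$ does not. Once this is settled, surjectivity of $w$ follows by combining the images on each stratum: $w(\bM_0^+)$ dominates the open complement of the two contracted $\PP^{1}$'s in $\bG$, because every length-two $Z \subset Q$ with $\ell_Z \not\subset Q$ lies on a $9$-dimensional family of $(2,3)$-curves; $w(\bM_1^+)$ covers the $\PP^{1}$ of $(1,0)$-lines via the base of the $\PP^{9}$-bundle $\bM_1 \to \PP^{2} \times \PP^{1}$; and $w(\bM_2^+)$ covers the $\PP^{1}$ of $(0,1)$-lines via the $\PP^{1}$-bundle projection $\bM_2^+ \to \bM_2$ composed with the canonical identification of its fiber with $|\cO_Q(0,1)|$.
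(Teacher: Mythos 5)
Your pointwise description of $w$ is correct on the dense strata and does identify the right map, but the step that actually produces a \emph{morphism} --- the relativization --- does not work as stated, and this is precisely the hard part of the lemma. Cohomology and base change (Grauert's criterion) requires the sheaf in question to be flat over the base, and neither $\cO_{\cW}$ nor $I_{\cW}(1,1)$ is flat over $\bM^{+}$: the Hilbert polynomial of $W$ jumps from the constant $2$ on $\bM_{0}^{+}\cup\bM_{1}^{+}\cup\bM_{2}^{+}$ to $m+1$ on $\bM_{3}^{+}$, so $\chi(I_{W}(m,m))$ is not locally constant. Constancy of $h^{0}$ and $h^{1}$ at the single twist $(1,1)$ does not, without flatness, make $\pi_{*}I_{\cW}(1,1)$ a subbundle of $V^{*}\otimes\cO_{\bM^{+}}$, nor does the relative scheme-theoretic (or Fitting) support commute with base change here. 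So the central claim of your proof is asserted by an argument that fails, not merely deferred. There is also a concrete error in the $\bM_{3}^{+}$ case: the connecting map $\rH^{0}(\cO_{\ell})\to\rH^{1}(\cO_{E})$ is an isomorphism only away from $\bM_{23}=\bM_{2}\cap\bM_{3}$; on $\bM_{23}$ it vanishes (this is exactly why $\dim\rH^{0}(F)=2$ there), so on $\bM_{3}^{+}\cap\bM_{2}^{+}$ the section need not factor through $\cO_{E}$ and your computation of $\mathrm{coker}(s)$ breaks down. Since the lemma gives overlapping prescriptions on $\bM_{2}^{+}$ and $\bM_{3}^{+}$, this locus cannot simply be ignored.

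For contrast, the paper avoids all of these flatness and degeneration issues by never constructing $w$ fiberwise: it starts from the known morphism $\bM^{\infty}\to\bH(2)\to\bG$ (the relative Hilbert scheme is a $\PP^{9}$-bundle over $\bH(2)$, composed with $t$ from Proposition \ref{prop:hilbcon}), passes to the common blow-up $\widetilde{\bM}^{\infty}$ of the flip $\bM^{\infty}\dashrightarrow\bM^{+}$, observes that the composition to $\bG$ is constant on the $\PP^{2}$-fibers over the flipped locus $\bM_{3}^{+}$, and invokes the rigidity lemma to descend to $\bM^{+}$. If you want to salvage your direct construction, you would need either to prove by hand (e.g.\ locally, or using that the candidate kernel is reflexive of rank $2$ on a smooth variety and is a subbundle in codimension one) that the fiberwise $2$-planes fit into a subbundle, or to build the map first on a space where the relevant family is flat and then descend --- which is essentially what the paper does.
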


\begin{proof}
By Proposition \ref{prop:stablepairs}, $\bM^{\infty}$ is the relative Hilbert scheme of 2 points on the universal $(2,3)$-curves, which is a $\PP^{9}$-bundle over $\bH(2)$ (\cite[Lemma 2.3]{CC12}). By composing with $t : \bH(2)\lr \bG$ in Proposition \ref{prop:hilbcon}, we have a morphism $\bM^{\infty}\to\bG$. On the other hand, since the flip $\bM^{\infty} \to \bM^{+}$ is the composition of a single blow-up/down, the blown-up space $\widetilde{\bM}^{\infty}$ admits two morphisms to $\bM^{\infty}$ and $\bM^+$, and the flipped locus is $\bM_{3}^{+}$. Note that each point in $\bM_{3}^{+}$ can be regarded as a collection of data $(E, \ell, e)$ where $E$ is a $(2,2)$-curve, $\ell$ is a $(0, 1)$-line, and $e \in \PP\Ext^{1}(\cO_{\ell}, \cO_{E})$. The fiber $\widetilde{\bM}^{\infty} \to \bM^{+}$ over the point in the blow-up center $\bM_{3}^{+}$ is a $\PP^2$ which parameterizes two points on $\ell$. The composition map $\widetilde{\bM}^{\infty}\to \bM^{\infty} \to \bG$ is constant along the $\PP^2$, because $\bG$ does not remember points on the line $\ell \subset Q$. By the rigidity lemma, $\widetilde{\bM}^{\infty}\lr \bG$ factors through $\bM^{+}$ and we obtain a map $w: \bM^{+}\lr \bG$.
\end{proof}

Note that $\bM_{1}^{+} \cong \bM_{1}$ is a $\PP^{9}$-bundle over $\PP^{2} \times \PP^{1}$ and $\bM_{2}^{+}$ is a $\PP^{1}$-bundle over $|\cO_{Q}(2, 3)| \cong \PP^{11}$. They are disjoint divisors on $\bM^{+}$. 

\begin{proposition}\label{prop:mainprop}
There is a birational morphism $q : \bM^{+}\setminus \bM_{1}^{+} \to \bP = \PP(\cU)$ such that $p \circ q : \bM^{+} \setminus \bM_{1}^{+}\to \bP \to \bG$ coincides with $w|_{\bM^{+}\setminus \bM_{1}^{+}}$ in Lemma \ref{lem:rigidmap}. Furthermore, $q$ is the smooth blow-down along $\bM_{2}^{+}$.
\end{proposition}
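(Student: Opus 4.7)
My plan is to construct $q$ explicitly using the universal pair, verify its birationality, and then establish the blow-down structure along $\bM_2^+$. For the construction, on $(\bM^+ \setminus \bM_1^+) \times Q$ let $(\mathbf{s}, \mathbf{F})$ be the universal pair with Fitting support $\mathbf{C}$, a flat family of $(2, 3)$-curves. Then $I_{\mathbf{C}} = p_1^* \cA^{-1} \otimes p_2^* \cO_Q(-2, -3)$ for some line bundle $\cA$ on $\bM^+ \setminus \bM_1^+$, and $p_{1*}(I_{\mathbf{C}} \otimes p_2^* \cO_Q(2, 3)) = \cA^{-1}$. Combining the natural inclusion $I_{\mathbf{C}} \hookrightarrow I_{\mathbf{Z}}$ (with $\mathbf{Z} = \mathrm{coker}(\mathbf{s})$) with the pullback of the Kronecker resolution defining $\cU$, I obtain a natural map $\cA^{-1} \to w^* \cU$ which is nonzero at every point. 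The associated morphism to $\PP(\cU)$ is $q$, and $p \circ q = w$ is automatic. Birationality follows on the open locus $\bM_0^+$: both $\bM_0^+$ and $q(\bM_0^+) \subset \bP$ canonically parametrize pairs $(Z, C)$ with $Z \in \bH(2) \setminus (Y_{10} \cup Y_{01})$ and $C \in |I_Z(2, 3)|$, and $q|_{\bM_0^+}$ realizes this identification.

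To analyze $q|_{\bM_2^+}$, note that the canonical isomorphism $H^0(\cO_C(0, 1)) = H^0(\cO_Q(0, 1))$ for any $(2, 3)$-curve $C$ trivializes the $\PP^1$-bundle $\bM_2^+ \to \bM_2$, giving $\bM_2^+ \cong \PP H^0(\cO_Q(0, 1)) \times |\cO_Q(2, 3)| = \PP^1 \times \PP^{11}$; here the $\PP^1$-factor parametrizes sections $s$ and the $\PP^{11}$-factor parametrizes curves $C$. Since $w(s, \cO_C(0, 1)) = E_{01}^{\ell_s}$ depends only on $s$, the image $q(\bM_2^+)$ lies inside the $10$-dimensional $\PP^9$-bundle $p^{-1}(\PP^1 \subset \bG)$ over the $E_{01}$-locus. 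A one-parameter degeneration argument, taking a family $(s_t, F_t)$ with $Z_t \to Z_0 \in Y_{01}$ and $[C_t] \to [C_0]$, shows that the limit of $\cA^{-1}|_{(s_t, F_t)}$ inside $w^*\cU|_{(s_t, F_t)}$ is the canonical line $H^0(\cO_{\ell_s}) \subset H^0(E_{01}^{\ell_s}) = \cU_{E_{01}^{\ell_s}}$ coming from the inclusion $\cO_{\ell_s} \hookrightarrow E_{01}^{\ell_s}$, independently of $C_0$. Therefore $q|_{\bM_2^+}$ factors through the first projection $\PP^1 \times \PP^{11} \to \PP^1$, with image $B := q(\bM_2^+) \cong \PP^1$ of codimension $12$ in $\bP$.

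To conclude $q$ is a smooth blow-down, I would compute the normal bundle $N_{B/\bP}$ on $B \cong \PP^1$ and verify $\PP(N^*_{B/\bP}) \cong \bM_2^+$ as $\PP^{11}$-bundles over $B$, which together with the isomorphism of $q$ away from $\bM_2^+$ identifies $q$ as the smooth blow-down along the center $B$. The main obstacle is the degeneration argument establishing that the limit line depends only on $\ell_s$; this requires a careful comparison of the line sub-bundle $\cA^{-1} \hookrightarrow w^*\cU$ with the elementary modification structure of $\cU$ near the $Y_{01}$ boundary of $\bG$.
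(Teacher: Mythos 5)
Your construction on the open stratum $\bM_{0}^{+}$ is correct and equivalent to the paper's: the line spanned by the equation of $C$ inside $\rH^{0}(I_{Z}(2,3)) = \cU_{w(s,F)}$ is exactly $q(s,F)$. But the entire content of the proposition is the extension of this line subbundle across $\bM_{2}^{+}$ (and across $\bM_{3}^{+}$, which you do not discuss at all), and that is precisely the step you defer to an unproved ``one-parameter degeneration argument.'' This is a genuine gap, not a technicality. For $(s,F=\cO_{C}(0,1))\in\bM_{2}^{+}$ the subscheme $Z=C\cap\ell_{s}$ lies in $Y_{01}$, so $I_{Z}(2,3)$ is \emph{not} the sheaf parametrized by $w(s,F)\in\bG$: the fiber of $\cU$ there is $\rH^{0}(E_{01})$ with $E_{01}$ the nonsplit extension $\ses{\cO_{\ell_{s}}}{E_{01}}{\cO_{Q}(2,2)}$, and the inclusion $I_{\mathbf{C}}\hookrightarrow I_{\mathbf{Z}}$ by itself produces no map into it. Likewise on $\bM_{3}^{+}$ the cokernel of $\mathbf{s}$ is not a flat family of length-two subschemes, so your $I_{\mathbf{Z}}$ does not exist there. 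The paper fills exactly this hole with an elementary modification of the universal \emph{pair} along the divisor $\bM_{2}^{+}$: the naive dual construction (Lemma \ref{lem:destab}) yields over $\bM_{2}^{+}$ the wrong extension $\ses{\cO_{Q}(2,2)}{G}{\cO_{\ell}}$, which admits no Kronecker resolution, and the modification swaps sub and quotient to produce $E_{01}$ together with the distinguished line $\rH^{0}(\cO_{\ell_{s}})\subset\rH^{0}(E_{01})$; the fact that the resulting extension is nonsplit --- equivalently, that the limit line is well defined, nonzero, and depends only on $\ell_{s}$ --- is itself a computation with $\cN_{\bM_{2}^{+}/\bM^{+}}$ (Lemma \ref{lem:modified}), not something that follows from naturality. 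Note also that your closing remark misplaces the modification: in the paper $\cU$ is modified along $Y_{10}$ on $\bG_{1}$ for a different purpose; no modification of $\cU$ occurs near the $E_{01}$-locus, which is a codimension-three $\PP^{1}$ in $\bG$, not a boundary divisor.

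On the blow-down statement, your plan (compute $N_{B/\bP}$ and match $\PP(N^{*}_{B/\bP})$ with $\bM_{2}^{+}$) could in principle be completed, but even after that identification you would still have to argue that $q$ coincides with the universal blow-down morphism. The paper's route is more direct and is the standard one when the contraction is already in hand: identify $\bM_{2}^{+}\cong\PP^{11}\times\PP^{1}$, compute $\cN_{\bM_{2}^{+}/\bM^{+}}\cong\cO_{\PP^{11}\times\PP^{1}}(-1,-1)$ from $\Ext^{1}(F,F)$, and apply the Fujiki--Nakano criterion to the contraction of the $\PP^{11}$-fibers. I would encourage you to redo the last step along those lines and, more importantly, to supply the missing analysis over $\bM_{2}^{+}$ and $\bM_{3}^{+}$, since as written the morphism $q$ has only been constructed on $\bM_{0}^{+}$.
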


The proof consists of several steps. Since $\bP = \PP(\cU)$ is a projective bundle over $\bG$, it is sufficient to construct a surjective homomorphism $w^{*}\cU^{*} \to \cL \to 0$ over $\bM^{+} \setminus \bM_{1}^{+}$ for some $\cL \in \mathrm{Pic}(\bM^{+}\setminus \bM_{1}^{+})$, or equivalently, a \emph{bundle} morphism $0 \to \cL^{*} \to w^{*}\cU$. 

Recall that a family $(\cL, \cF)$ of pairs on a scheme $S$ is a collection of data $\cL \in \mathrm{Pic}(S)$, $\cF \in \mathsf{Coh}(S \times Q)$, which is a flat family of pure sheaves, and a surjective morphism $\cExt^2_{\pi}(\cF, \omega_{\pi})\twoheadrightarrow \cL$ where $\pi : S \times Q \to S$ is the projection and $\omega_{\pi}$ is the relatively dualizing sheaf (See \cite[Section 4.3]{LP93b} for the explanation why we take the dual.). Now let $(\cL,\cF)$ be the universal pair (\cite[Theorem 4.8]{He98}) on $\bM^+\times Q$. By applying $\cHom(-,\cO)$ to $\cExt^2_{\pi}(\cF, \omega_{\pi})\twoheadrightarrow \cL$, we obtain $0 \to \cL^{*} \to \cHom(\cExt^{2}_{\pi}(\cF, \omega_{\pi}), \cO)$. It can be shown that $\cHom(\cExt^{2}_{\pi}(\cF, \omega_{\pi}), \cO) \cong \cExt^{1}_{\pi}(\cExt^{1}(\cF, \cO), \cO)$ (see \cite[Section 3.2]{CM15}). So we have a non-zero element $e \in \Hom(\cL^{*}, \cExt_{\pi}^{1}(\cExt^1(\cF,\cO), \cO)) \cong \Ext^{1}(\cExt^1(\cF,\cO), \pi^{*}\cL)$ (\cite[Section 3.2]{CM15}), which provides $\ses{{\pi}^*\cL}{\cE}{\cExt^1(\cF, \cO)}$ on $\bM^{+} \times Q$. By taking $\cHom_{\pi}(-,\omega_{\pi})$, we have $\cExt_{\pi}^2(\cE,\omega_{\pi})\rightarrow \cExt_{\pi}^{2}(\pi^{*}\cL, \omega_{\pi}) \cong \cL^{*}\to 0$ because $\cL$ is a line bundle. This implies the existence of a flat family of pairs $(\cL^*,\cE)$ on $\bM^+\times Q$. We may explicitly describe this construction fiberwisely in the following way. Let $(s, F) \in \bM^{+}$. Let $F^D:=\cExt^1(F, \omega_{Q})$. For a non-zero section $ s\in \rH^0(F)\cong\rH^1(F^D)^{*} \cong\Ext^1(F^D(2,2),(s^{*})\otimes \cO_{Q})$, we have a pair $(s^*,G)$ given by 
\begin{equation}\label{eq-1}
\ses{(s^{*})\otimes \cO_{Q}}{G}{F^{D}(2,2)}.
\end{equation}

\begin{lemma}\label{lem:destab}
The map $(s, F) \mapsto (s^{*},G)$ defines a dominant rational map $\bM^{+} \dashrightarrow \bP = \PP(\cU)$, which is regular on $\bM^{+} \setminus (\bM_{1}^{+} \sqcup \bM_{2}^{+})$.
\end{lemma}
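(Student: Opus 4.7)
The strategy is to identify, pointwise on the open subset $U:=\bM^+\setminus(\bM_1^+\sqcup \bM_2^+)$, the extension sheaf $G$ produced by \eqref{eq-1} with an explicit rank-one torsion-free sheaf of the form $I_Z(2,3)$ on $Q$, so that the section $s^*$ becomes an element of the fiber $\cU_{[e]}$ at $[e]=w(s,F)\in\bG$, and then to globalize using the universal family $(\cL^*,\cE)$ on $\bM^+\times Q$ described just before the lemma. The universal property of the projective bundle $\bP=\PP(\cU)\to\bG$ then yields the morphism $q:U\to\bP$, and dominance will follow from injectivity on the main stratum.

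On $\bM_0^+$ with $F=\cO_C(p+q)$, adjunction gives $\omega_C\cong\cO_C(0,1)$ and hence $F^D(2,2)\cong\cO_C(2,3)(-p-q)$, and I would compare \eqref{eq-1} with the tautological short exact sequence
\[
0\lr \cO_Q\lr I_{\{p,q\}}(2,3)\lr\cO_C(2,3)(-p-q)\lr 0
\]
whose left injection is multiplication by the equation of $C$. One-dimensionality of $\Ext^1(F^D(2,2),\cO_Q)\cong \rH^0(F)$ on $\bM_0^+$ forces the two extensions to agree up to scalar, giving $G\cong I_{\{p,q\}}(2,3)$. Via the Kronecker resolution \eqref{res1} of $I_{\{p,q\}}(2,3)$, the section $s^*$ (the equation of $C$) becomes the required element of $\cU_{[e]}$ at $[e]=w(s,F)$. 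An analogous computation handles $\bM_3^+$: starting from $0\to\cO_E\to F\to\cO_\ell\to 0$ and using that the unique section $s$ factors through $\cO_E\hookrightarrow F$, one obtains a rank-one torsion-free $G$ whose Kronecker data sends it to the point $w(s,F)\in\bG$ specified in Lemma \ref{lem:rigidmap}.

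On the exceptional loci $\bM_1^+$ and $\bM_2^+$, the same computation produces $G\cong I_Z(2,3)$ but now with $Z$ lying on a $(1,0)$-line or a $(0,1)$-line respectively (in the latter case, a direct check shows $Z=C\cap\ell$ for $\ell$ the line cut out by $s\in\rH^0(F)\cong\rH^0(\cO_Q(0,1))$). By Proposition \ref{prop:hilbcon}, such $Z$ belong to the contracted divisors $Y_{10}$ or $Y_{01}$; the cokernel of the universal Kronecker map at the corresponding point of $\bG$ is then $E_{10}$ (resp.\ $E_{01}$), not $I_Z(2,3)$, and $\rH^0(G)$ cannot be identified with $\cU_{[e]}=\rH^0(E_{ij})$ in a family-compatible way. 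Consequently $q$ does not extend across $\bM_1^+\sqcup \bM_2^+$. For dominance, observe that on $\bM_0^+$ the assignment $(s,F)\mapsto (s^*,G)=(C,I_{\{p,q\}}(2,3))$ is injective (one recovers $C$ as the zero locus of $s^*$ and $\{p,q\}$ as the degeneracy of the associated Kronecker matrix), so together with $\dim\bM^+=\dim\bP=13$ the map $q$ is birational, in particular dominant.

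The main technical obstacle is the pointwise identification $G\cong I_Z(2,3)$, which reduces to matching two \emph{a priori} distinct extensions in the one-dimensional $\Ext^1(F^D(2,2),\cO_Q)$ via the Serre duality isomorphism $\rH^0(F)\cong \Ext^1(F^D(2,2),\cO_Q)$. The remaining family-level assertions (flatness of $\cE|_{U\times Q}$ and globality of the bundle inclusion $\cL^*\hookrightarrow w^*\cU$) are routine consequences of standard base-change applied to the universal pair on $\bM^+\times Q$ together with the Kronecker resolution \eqref{res1} on $\bG\times Q$.
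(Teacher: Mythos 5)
Your treatment of $\bM_0^+$ (matching the extension \eqref{eq-1} against $\ses{\cO_Q}{I_{Z,Q}(2,3)}{I_{Z,C}(2,3)}$ inside the one-dimensional $\Ext^1(F^D(2,2),\cO_Q)$), the globalization via the universal pair, and the exclusions of $\bM_1^+$ and $\bM_2^+$ all agree in substance with the paper. The gap is the stratum $\bM_3^+\setminus\bM_2^+$, which lies inside the locus where regularity is claimed and is the one case you assert rather than compute. The ``analogous computation'' there does \emph{not} produce a rank-one torsion-free sheaf of the form $I_Z(2,3)$: dualizing $\ses{\cO_E}{F}{\cO_\ell}$ gives $\ses{\cO_\ell(0,1)}{F^D(2,2)}{\cO_E(2,2)}$, and since $\Ext^1(\cO_E(2,2),\cO_Q)\to\Ext^1(F^D(2,2),\cO_Q)$ is an isomorphism of one-dimensional spaces, the extension \eqref{eq-1} is the pull-back of $\ses{\cO_Q}{\cO_Q(2,2)}{\cO_E(2,2)}$ along $F^D(2,2)\to\cO_E(2,2)$. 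The snake lemma then yields $\ses{\cO_\ell(0,1)}{G}{\cO_Q(2,2)}$, so $G$ contains the torsion subsheaf $\cO_\ell(0,1)$; it is the unique non-split extension $E_{01}$ of Proposition \ref{prop:hilbcon}, i.e.\ exactly the cokernel of the degenerate Kronecker representation at the boundary point $t(Y_{01})\in\bG$, and \emph{that} is why $(s^*,G)$ defines a point of $\bP$. This is the content of the diagram \eqref{eqn:ladder1} in the paper.

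The omission is not cosmetic: your own mechanism for ruling out $\bM_1^+$ and $\bM_2^+$ is that at boundary points of $\bG$ the universal cokernel is $E_{10}$ or $E_{01}$ rather than an ideal sheaf. Since $w$ sends $\bM_3^+$ to exactly such boundary points, if $G$ on $\bM_3^+$ really were torsion-free of the form $I_Z(2,3)$ your argument would force non-regularity on $\bM_3^+$ as well, contradicting the lemma. So identifying $G\cong E_{01}$ is the step that makes the statement true, not an optional refinement. (A smaller point: on $\bM_2^+$ the paper only needs $\ses{\cO_Q(2,2)}{G}{\cO_\ell}$, whence $\Hom(\cO_Q(2,2),G)\neq 0$ and $G$ admits no resolution of the form \eqref{res1}; your stronger claim $G\cong I_{C\cap\ell}(2,3)$ is unverified and unnecessary.)
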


\begin{proof}
Since we have a relative construction of pairs, it suffices to describe the extension $(s^*,G)$ set theoretically. If $(s,F)\in \bM_{0}^{+} \sqcup \bM_{1}^{+}$, then $F\cong \cO_{C}(p+q) \cong I_{Z,C}^D(0,-1)$ for some curve $C$ and $Z = \{p, q\}\in \bH(2)$ such that the line $\ell_{Z}$ containing $Z$ is not in $Q$ (\cite[Section 4.4]{He98}). Then $F^D(2,2)\cong I_{Z,C}(2,3)$. Since $\Ext^{1}(F^{D}(2,2), \cO_{Q}) \cong \rH^1(F^D)^*\cong \rH^0(F) \cong\CC$, from $\ses{\cO_{Q}(-2,-3)\cong I_{C,Q}}{I_{Z,Q}}{I_{Z,C}}$, we obtain $G=I_{Z,Q}(2,3)$. If $(s, F) \in \bM_{0}^{+}$, then we have an element $(s^*, G)\in \bP$ because $G$ has a resolution of the form $\cO_{Q}(0, 1) \to \cO_{Q}(1, 2)^{\oplus 2}$. However, if $(s, F) \in \bM_{1}^{+}$, then we have $\ses{I_{\ell_{Z}, Q}(2, 3)}{G = I_{Z, Q}(2, 3)}{I_{Z, \ell_{Z}}(2, 3)}$ and $I_{\ell_{Z}, Q}(2, 3) = \cO_{Q}(1, 3)$, $I_{Z, \ell_{Z}}(2, 3) = \cO_{\ell_{Z}}(1)$. In particular, $\Hom(\cO_{Q}(1, 3), G) \ne 0$ and $G$ does not admit a resolution $\cO_{Q}(0, 1) \to \cO_{Q}(1, 2)^{\oplus 2}$. So $G \notin \bG$.

Suppose that $(s,F)\in \bM_{3}^{+}\setminus \bM_{2}^+$. Then $F$ fits into a non-split extension $\ses{\cO_E}{F}{\cO_\ell}$. Apply $\cHom(-,\omega_{Q})$, then we have $\ses{\cO_{\ell}(0,1)}{F^{D}(2,2)}{\cO_{E}(2,2)}$. Since $\Ext^1(\cO_E(2,2),\cO_{Q})\cong \Ext^1(F^D(2,2),\cO_{Q}) \cong\CC$, the sheaf $G$ is given by the pull-back:
\begin{equation}\label{eqn:ladder1}
\xymatrix{0\ar[r]&\cO_{Q}\ar[r]\ar@{=}[d]
&\cO_{Q}(2,2)\ar[r]&\cO_E(2,2)\ar[r]&0\\
0\ar[r]&\cO_{Q}\ar[r]&G\ar[r]\ar@{-->}[u]&F^D(2,2)\ar[u]\ar[r]&0
}
\end{equation}
By applying the snake lemma to \eqref{eqn:ladder1}, we conclude that the unique non-split extension $G$ lies on $\ses{\cO_\ell(0,1)}{G}{\cO_{Q}(2,2)}$. Hence $G \in \bG$ (Proposition \ref{prop:hilbcon}) and we have an element $(s^*, G)\in \bP$.

Now suppose that $(s,F)\in \bM_2^+$, so $F=\cO_C(0,1)$. Then $F^D(2,2)=\cO_C(2,2)$. So we have $\ses{(s^{*})\otimes \cO_{Q}}{G}{\cO_C(2,2)}$. By the snake lemma (Consult the proof of \cite[Lemma 3.7]{CM15}.), $G$ fits into $\ses{\cO_{Q}(2,2)}{G}{\cO_{\ell}}$ where $\ell$ is the line of type $(0,1)$ determined by the section $s$. So $\Hom(\cO_{Q}(2, 2), G) \ne 0$ and this implies $G$ does not admit a resolution $\cO_{Q}(0, 1) \to \cO_{Q}(1, 2)^{\oplus 2}$. Thus the correspondence is not well-defined on $\bM_2^+$.
\end{proof}

\subsection{The first elementary modification and the extension of the domain} 

We can extend the morphism in Lemma \ref{lem:destab} by applying an elementary modification of pairs (\cite[Section 2.2]{CC12}) on $\bM_{2}^{+}$. 

\begin{lemma}
There exists an exact sequence of pairs $\ses{(0, K)}{(\cL^*|_{\bM_2^+},\cE|_{\bM_2^+\times Q} )}{(\cL^{''},\cO_{\cZ})}$ where $\cZ$ is the pull-back of the universal family of $(0, 1)$-lines to $\bM_{2}^{+} \times Q$ and $K_{\{m\}\times Q}\cong \cO_Q(2,2)$ for $m = [(s, F)]\in \bM_2^+$.
\end{lemma}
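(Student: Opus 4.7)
The plan is to upgrade the fiberwise analysis of $\bM_2^+$ from the proof of Lemma \ref{lem:destab} to a statement about the universal pair, by constructing the quotient map $\cE|_{\bM_2^+\times Q}\to \cO_\cZ\otimes \pi^*\cL''$ globally and identifying the kernel. At each point $(s,F)=(s,\cO_C(0,1))\in \bM_2^+$, the sheaf $G$ in the pair $(s^*,G)$ sits in the extension $\ses{\cO_Q(2,2)}{G}{\cO_\ell}$, where $\ell$ is the $(0,1)$-line cut out by lifting $s$ to $\tilde s\in H^0(\cO_Q(0,1))$. Under this quotient the pair section $s^*:\cO_Q\hookrightarrow G$ (inherited from the first extension $\ses{\cO_Q}{G}{\cO_C(2,2)}$) maps to the nonzero section $1\in H^0(\cO_\ell)$, while $\cO_Q(2,2)\subset G$ receives the zero section because $s^*\notin \cO_Q(2,2)$. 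So fiberwise we obtain an exact sequence of pairs of the stated shape, and the task is to realize it as a sequence of coherent sheaves on $\bM_2^+\times Q$.

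The family setup is clean: since $H^i(\cO_Q(-2,-2))=0$ for $i=0,1$, restriction gives an isomorphism $H^0(\cO_Q(0,1))\cong H^0(\cO_C(0,1))$ for every $C\in |\cO_Q(2,3)|$, so the $\PP^1$-bundle $\bM_2^+\to \bM_2$ is canonically trivial and $\bM_2^+\cong |\cO_Q(2,3)|\times |\cO_Q(0,1)|$. The projection $\sigma:\bM_2^+\to |\cO_Q(0,1)|\cong \PP^1$ pulls back the universal $(0,1)$-line $\cZ_0$ to a relative effective Cartier divisor $\cZ:=(\sigma\times \mathrm{id}_Q)^*\cZ_0\subset \bM_2^+\times Q$, flat over $\bM_2^+$ with fiber $\ell$. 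The universal $(2,3)$-curve $\cC\subset \bM_2^+\times Q$ comes similarly from the first factor.

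To globalize the fiberwise extension, I would run the snake-lemma argument of \eqref{eqn:ladder1} relatively. On $\bM_2^+\times Q$, the universal presentation $\ses{\pi^*\cL}{\cE}{\cExt^1(\cF,\cO)}$ from Section \ref{sec:masterspace} restricts to $\ses{\pi^*\cL}{\cE|_{\bM_2^+\times Q}}{\cO_{\cC}(2,2)\otimes \pi^*\cN}$ for some line bundle $\cN$ on $\bM_2^+$, using $\cExt^1(\cO_C(0,1),\cO_Q)=\cO_C(2,2)$. Simultaneously, the tautological section of $\pi_Q^*\cO_Q(0,1)\otimes \sigma^*\cO_{\PP^1}(1)$ cutting out $\cZ$ gives, after tensoring with $\pi^*\cL$, an exact sequence $\ses{\pi^*\cL}{\pi_Q^*\cO_Q(0,1)\otimes \pi^*\cL'}{\cO_\cZ\otimes \pi^*\cL''}$. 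A morphism from this second sequence into the first, with the identity on $\pi^*\cL$ and with right vertical map realizing fiberwise the natural push-out/pull-back comparison between the two presentations of $G$, yields via the snake lemma the desired surjection $\cE|_{\bM_2^+\times Q}\twoheadrightarrow \cO_\cZ\otimes \pi^*\cL''$ whose kernel $K$ is automatically flat over $\bM_2^+$ with fibers $\cO_Q(2,2)$, hence isomorphic to $\pi_Q^*\cO_Q(2,2)\otimes \pi^*\cM$ for some line bundle $\cM$ on $\bM_2^+$.

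The main obstacle is constructing this morphism of short exact sequences globally: one must verify that the fiberwise extension class realizing $G$ as an extension of $\cO_\ell$ by $\cO_Q(2,2)$ --- equivalently, as a push-out of $\cO_Q(0,1)$ along a section of $\cO_Q(2,2)$ varying algebraically with $C$ --- depends algebraically on $(C,\ell)\in \bM_2^+$ and is compatible with the extension class defining $\cE$. Once the global diagram is in place, the pair-theoretic compatibility is immediate from the fiberwise picture: the universal section $\pi^*\cL^*|_{\bM_2^+\times Q}\to \cE|_{\bM_2^+\times Q}$ factors through the new quotient (identifying $\cL''$ with the line bundle of sections) while its composition into $K$ vanishes, giving the exact sequence of pairs asserted in the lemma.
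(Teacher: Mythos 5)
Your fiberwise picture and the globalization strategy are reasonable, and in fact the sheaf-level sequence $\ses{K}{\cE|_{\bM_2^+\times Q}}{\cO_{\cZ}}$ is already available from the relative construction underlying Lemma \ref{lem:destab}, so the ``main obstacle'' you flag (building the morphism of short exact sequences in families) is not where the real difficulty of this lemma lies. The genuine gap is the one clause you pass over without argument: you assert that the section $s^*$ of $G$ maps to a nonzero section of $\cO_\ell$ ``because $s^*\notin \cO_Q(2,2)$.'' That is precisely the statement that must be proved --- it is the entire content of the lemma beyond the sheaf sequence, since the claim that the sub-object is the pair $(0,K)$ (section zero on the subsheaf) is equivalent to $s^*$ not factoring through $K\cong\cO_Q(2,2)$. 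With $\rH^0(\cO_Q(2,2))$ being $9$-dimensional there is no a priori reason the distinguished section of $G$ avoids the image of $\rH^{0}(\cO_Q(2,2))\to\rH^{0}(G)$, so this cannot be waved through.

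The paper closes exactly this gap with a stability argument: if $s^*$ factored through $\cO_Q(2,2)\subset G$, the resulting nonzero map $\cO_Q\to\cO_Q(2,2)$ would be injective with cokernel $\cO_E(2,2)$ for some $(2,2)$-curve $E$, and the snake lemma applied to the comparison of $\ses{\cO_Q}{\cO_Q(2,2)}{\cO_E(2,2)}$ with $\ses{\cO_Q}{G}{F^D(2,2)=\cO_C(2,2)}$ yields $\ses{\cO_E(2,2)}{\cO_C(2,2)}{\cO_\ell}$. The reduced Hilbert polynomial of $\cO_E(2,2)$ is $m+2$, which exceeds that of $\cO_C(2,2)$, namely $m+\tfrac{9}{5}$, so this contradicts the stability of $F^D(2,2)$. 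You need to supply this (or an equivalent) argument; once it is in place, your identification of the quotient pair as $(\cL'',\cO_{\cZ})$ and of the kernel as $(0,K)$ goes through as you describe.
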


\begin{proof}
The last part of the proof of Lemma \ref{lem:destab} tells us that there is an exact sequence of \emph{sheaves} $\ses{K}{\cE|_{\bM_{2}^{+} \times Q}}{\cO_{\cZ}}$. Now it is sufficient to show that for each fiber $G = \cE|_{\{(s, F)\} \times Q}$, the section $s^{*}$ of $G$ does not come from $\rH^{0}(\cO_{Q}(2, 2))$. If it is, we have an injection $\cO_{Q} \subset \cO_{Q}(2, 2)$ whose cokernel is $\cO_{E}(2, 2)$ for some elliptic curve $E$. By the snake lemma once again, we obtain $\ses{\cO_{E}(2, 2)}{F^{D}(2, 2) = \cO_{C}(2, 2)}{\cO_{\ell}}$. It violates the stability of $F^{D}(2, 2)$. 
\end{proof}


Let $(\cL',\cE')$ be the elementary modification of $(\cL^{*}, \cE)$ along $\bM_{2}^{+}$, that is, 
\[
	\mathrm{Ker}( (\cL^*, \cE)\twoheadrightarrow  
	(\cL^*|_{\bM_2^+},\cE|_{\bM_2^+\times Q} )\twoheadrightarrow 
	(\cL^{''},\cO_{\cZ})).
\]

\begin{lemma}\label{lem:modified}
For a point $m = [(s, F = O_C(0,1))] \in \bM_2^+$, the modified pair $(\cL',\cE')|_{\{m\}\times Q}$ fits into a non-split exact sequence $\ses{(s',\cO_{\ell})}{(s',\cE'|_{\{m\}\times Q})}{(0,\cO_{Q}(2,2))}$ where $\ell$ is a $(0, 1)$-line.
\end{lemma}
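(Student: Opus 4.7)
The plan is to analyze the elementary modification of pairs fiberwise at $m \in \bM_2^+$ by combining a $\mathrm{Tor}$ computation for the sheaf part with an analysis of how the pair structure transforms.

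First, restrict the underlying sheaf short exact sequence $\ses{\cE'}{\cE}{\cO_\cZ}$ on $\bM^+\times Q$ to $\{m\}\times Q$. Since $\cE$ is flat over $\bM^+$ (being a universal family of pairs), the higher $\mathrm{Tor}$ of $\cE$ with $\cO_{\{m\}\times Q}$ vanishes, yielding the four-term exact sequence
\[
0 \to \mathrm{Tor}_1^{\cO_{\bM^+\times Q}}(\cO_{\{m\}\times Q},\cO_\cZ) \to \cE'|_{\{m\}\times Q} \to \cE|_{\{m\}\times Q} \to \cO_\ell \to 0.
\]
The subscheme $\cZ$ is a smooth codimension-two complete intersection in $\bM^+\times Q$: $\bM_2^+\subset \bM^+$ is a smooth divisor (the exceptional divisor of the blow-up $r:\bM^+\to\bM$) and $\cZ\subset\bM_2^+\times Q$ is a smooth divisor, so locally $\cO_\cZ \cong \cO/(f,g)$ for a regular sequence in which $f$ defines $\bM_2^+\times Q$ and $g$ defines $\cZ$ inside it. A direct Koszul computation then gives $\mathrm{Tor}_1\cong \cO_\ell$. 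Combined with the identification $\ker(G \to \cO_\ell) = \cO_Q(2,2)$ in the proof of Lemma~\ref{lem:destab} (where $G := \cE|_{\{m\}\times Q}$ fits into $\ses{\cO_Q(2,2)}{G}{\cO_\ell}$), this produces the sheaf short exact sequence $\ses{\cO_\ell}{\cE'|_{\{m\}\times Q}}{\cO_Q(2,2)}$.

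Second, upgrade this to an exact sequence of pairs. The key point is that the section $s^*:\cO_Q\hookrightarrow G$ factors through $\cO_Q(2,2)\subset G$ by the snake-lemma step of Lemma~\ref{lem:destab}, so the image of $s^*$ in the quotient $\cO_\ell$ vanishes. At the pair level, this forces the section $s'$ of the modified pair at the fiber $m$ to be produced not from a lift of $s^*$ through $\cE'|_{\{m\}\times Q}\to G$, but rather from the canonical generator of the newly created $\mathrm{Tor}$-subsheaf $\cO_\ell$. Dually, the pair structure $\cExt^2_\pi(\cE',\omega_\pi)|_m\twoheadrightarrow \cL'|_m$ factors through $\cExt^2(\cO_\ell,\omega_Q)\cong \cL'|_m$; by Serre duality this is equivalent to $s'\in \rH^0(\cE'|_{\{m\}\times Q})$ lying in the image of $\rH^0(\cO_\ell)\hookrightarrow \rH^0(\cE'|_{\{m\}\times Q})$. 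This yields the claimed exact sequence of pairs with section $s'$ on the first two terms and zero on the quotient.

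Finally, non-splitness. One computes $\Ext^1(\cO_Q(2,2),\cO_\ell) \cong \rH^1(\cO_\ell(-2,-2)) \cong \rH^1(\cO_{\PP^1}(-2)) \cong \CC$, so there is a unique nontrivial extension class. The extension class realizing $\cE'|_{\{m\}\times Q}$ equals the connecting homomorphism coming from the non-vanishing $\mathrm{Tor}$ in the first step, which is a non-zero generator; alternatively, if the sequence were split, $\cO_Q(2,2)$ would be a direct summand, forcing $\Hom(\cO_Q(2,2),\cE'|_{\{m\}\times Q})\neq 0$ and contradicting the expected resolution of the modified pair.

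The main obstacle is the second step: carefully tracking how the pair structure transforms under the elementary modification of \emph{pairs} rather than of sheaves, in particular showing that the new section $s'$ is the canonical generator of the $\mathrm{Tor}$-created $\cO_\ell$ rather than a lift of $s^*$. This crucially uses that the target pair $(\cL'',\cO_\cZ)$ in the modification carries its own nontrivial pair structure, which transfers onto the $\mathrm{Tor}$ subsheaf at the fiber.
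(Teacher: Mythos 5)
There are two genuine gaps. First, your second step rests on a false premise: you assert that $s^{*}:\cO_{Q}\to G$ factors through the subsheaf $\cO_{Q}(2,2)\subset G$. It does not --- if it did, the snake lemma would produce $\ses{\cO_{E}(2,2)}{F^{D}(2,2)=\cO_{C}(2,2)}{\cO_{\ell}}$ for some $(2,2)$-curve $E$, violating the stability of $F^{D}(2,2)$; this is exactly the content of the lemma immediately preceding the statement, and it is what guarantees that the quotient pair in $\ses{(0,K)}{(\cL^*|_{\bM_2^+},\cE|_{\bM_2^+\times Q})}{(\cL'',\cO_{\cZ})}$ carries a \emph{nontrivial} section. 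Your inference is also backwards: if $s^{*}$ did land in $\ker(G\to\cO_{\ell})=\cO_{Q}(2,2)$, it would lift to $\cE'|_{\{m\}\times Q}$ (the obstruction lies in $\rH^{1}(\cO_{\ell})=0$), so the modified section would be such a lift rather than a generator of the $\mathrm{Tor}$-subsheaf. The correct mechanism is the opposite one: precisely because $s^{*}$ maps to a nonzero section of $\cO_{\ell}$, the elementary modification of \emph{pairs} interchanges the sub- and quotient pairs (the paper cites He, Lemma 4.24, for this), which is what places the new section $s'$ on the $\cO_{\ell}$ piece.

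Second, the non-splitness argument does not go through. Non-vanishing of $\mathrm{Tor}_{1}$ only yields the four-term sequence and hence the two-step filtration; it says nothing about whether the resulting class in $\Ext^{1}((0,\cO_{Q}(2,2)),(s',\cO_{\ell}))\cong\CC$ is zero. Your fallback --- that splitness would contradict ``the expected resolution of the modified pair'' --- is circular: that $\cE'|_{\{m\}\times Q}$ admits a resolution $\cO_{Q}(0,1)\to\cO_{Q}(1,2)^{\oplus 2}$, i.e.\ is the point $E_{01}$ of $\bG$ in Proposition \ref{prop:hilbcon} (the \emph{unique non-split} extension), is exactly what non-splitness is needed to establish, and it is used for this purpose in the proof of Proposition \ref{prop:mainprop} to identify $\cE'$ with the pullback of the universal quotient $\cW$. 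The paper supplies the missing content by a deformation-theoretic computation: it canonically identifies the normal space $\cN_{\bM_{2}^{+}/\bM^{+}}$ at $m$ with $\rH^{0}(\cO_{\ell})^{*}\cong\Ext^{1}((0,\cO_{Q}(2,2)),(s',\cO_{\ell}))$, so that the nonzero normal direction along which the modification is performed \emph{is} the extension class. An argument of this kind is required and is absent from your proposal.
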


\begin{proof}
An elementary modification of pairs interchanges the sub pair with the quotient pair (\cite[Lemma 4.24]{He98}). Thus we obtain the sequence. It remains to show that the sequence is non-split. We will show that the normal bundle $\cN_{\bM_2^+/\bM^{+}}$ at $m$ is canonically isomorphic to $\rH^0(\cO_{\ell})^*$. Then the element $m$ corresponds to the projective equivalent class of nonzero elements in $\rH^{0}(\cO_{\ell})^{*} \cong \Ext^1((0,\cO_{Q}(2,2)),(s',\cO_{\ell}))$, so it is non-split.

The pair $(s,F)$ fits into $\ses{(0,\cO_{Q}(-2,-2))}{(s,\cO_{Q}(0,1))}{(s,F)}$. Thus we have
\[
0\lr \Ext^0((0,\cO_{Q}(-2,-2)), (s,F))\lr \Ext^1((s,F),(s,F))\lr \Ext^1((s,\cO_{Q}(0,1)),(s,F))\lr \cdots.
\]
The first term $\Ext^0((0,\cO_{Q}(-2,-2)), (s,F))\cong \rH^{0}(\cO_{C}(2, 3)) \cong \CC^{11}$ is the deformation space of curves $C$ on $Q$. The second term $\Ext^{1}((s, F), (s,F))$ is $\cT_{m}\bM^{+}$ (\cite[Theorem 3.12]{He98}). For the third term, by \cite[Theorem 3.12]{He98}, we have
\[
0\lr \Hom(s, \rH^0(F)/\langle s\rangle)\lr \Ext^1((s,\cO_{Q}(0,1)),(s,F))\lr \Ext^1(\cO_{Q}(0,1), F)\stackrel{\phi}{\lr} \Hom(s, \rH^1(F)).
\]
The first term $\Hom(s, \rH^0(F)/\langle s\rangle)=\CC$ is the deformation space of the line $\ell$ in $Q$ determined by the section $s$. By Serre duality, $\phi : \rH^{0}(\cO_{Q}(0, 1))^{*} \to \rH^{0}(\cO_{Q})^{*}$ and the kernel is $\rH^{0}(\cO_{\ell}(0, 1))^{*} \cong \rH^{0}(\cO_{\ell})^{*}$. This proves our assertion.
\end{proof}

Recall that the modified pair $(\cL',\cE')$ provides a natural surjection $\cExt_{\pi}^2(\cE',\omega_{\pi})\twoheadrightarrow \cL'$ on $\bM^+ \times Q$. It is straightforward to check that $\cExt_{\pi}^2(\cE',\omega_{\pi})$ has rank $10$ at each fiber, thus it is locally free.

\begin{proof}[Proof of Proposition \ref{prop:mainprop}]
We claim that there exists a surjection $w^*\cU^*\lr \cL' \lr 0$ up to a twisting by a line bundle on $\bM^+\setminus \bM_1^+$. Then there is a morphism $\bM^+\setminus \bM_1^+\lr \bP$.

Consider the following commutative diagram
\[
\xymatrix{(\bM^+\setminus \bM_1^+)\times Q\ar[r]_{w':=w\times \mathrm{id}}\ar[d]_{\pi}& \bG\times Q\ar[d]^{\pi}\\
\bM^+\setminus \bM_1^+ \ar[r]^{w}&\bG.}
\]
Note that $\cU = \pi_*(\cW)$ where $\cW=\mathrm{coker}(\phi)$ is the universal quotient on $\bG\times Q$ (Section \ref{sub:defq}). One can check that $\cW$ is flat over $\bG$. By its construction of $w$, $\cE'|_{\{m\}\times Q}\cong {w'}^{*}\cW|_{\{m\}\times Q}$ restricted to each point $m\in \bM^+\setminus \bM_{1}^{+}$. The universal property of $\bG$ (as a quiver representation space \cite[Proposition 5.6]{Kin94}) tells us that ${w'}^{*}\cW\cong \cE'$ up to a twisting by a line bundle on $\bM^+\setminus \bM_1^+$. The base change property implies that there exists a natural isomorphism (up to a twisting by a line bundle) $w^*\cU=w^*(\pi_*\cW)\cong \pi_*({w'}^*\cW)=\pi_* \cE' \cong \cExt_\pi^2(\cE',\omega_{\pi})^*$ by \cite[Corollary 8.19]{LP93b}. Hence we have $w^*\cU^*\cong (w^*\cU)^*\cong (\pi_* (\cE'))^*\cong \cExt_{\pi}^2(\cE',\omega_{\pi}) \twoheadrightarrow \cL'$. Therefore we obtain a morphism $q : \bM^{+}\setminus \bM_1^+ \to \bP$. 

By the proof of Lemma \ref{lem:modified}, the modified pair does not depend on the choice of a $(2, 3)$-curve, so $q: \bM^{+}\setminus \bM_1^+\to \bP\setminus p^{-1}(t(Y_{10}))$ is indeed a contraction of $\bM_{2}^{+}$ and the image of $\bM_{2}^{+}$ is $Y_{01}$. Recall that the exceptional divisor $\bM_{2}^{+}$ is $|\cO_{Q}(2, 3)| \times |\cO_{Q}(0, 1)| \cong \PP^{11} \times \PP^{1}$. Note that the sheaf $F$ in the pair $(s, F) \in \bM_{2}^{+}$ is parametrized by $\PP^{11} = |\cO_{Q}(2, 3)| = \PP\Ext^{1}(\cO_{Q}(-2,-2)[1], \cO_{Q}(0, 1))$. It follows also from the fact that each $F$ fits into a triangle $\ses{\cO_{Q}(0, 1)}{F}{\cO_{Q}(-2,-2)[1]}$. By analyzing $T_{F}\bM = \Ext^{1}(F, F)$ (which is similar to \cite[Lemma 3.4]{CC12}), one can see that $\cN_{\bM_{2}/\bM}|_{\PP^{11}} \cong \Ext^{1}(\cO_{Q}(0, 1), \cO_Q(-2, -2)[1])\otimes \cO_{\PP^{11}}(-1) \cong \rH^{0}(\cO_{Q}(0, 1))^* \otimes \cO_{\PP^{11}}(-1)$. Thus $\cN_{\bM_{2}^{+}/\bM^{+}} \cong \cO_{\PP^{11}\times \PP^{1}}(-1, -1)$ and $q$ is a smooth blow-down by Fujiki-Nakano criterion. 
\end{proof}

Thus we have two different contractions of $\bM^{+}$, one is $\bM$ obtained by contracting all $\PP^{1}$-fibers on $\bM_{2}^{+}$, and the other is:

\begin{definition}\label{def:mminus}
Let $\bM^{-}$ be the contraction of $\bM^+$ which is obtained by contracting all $\PP^{11}$-fibers on $\bM_{2}^{+}$. We define $\bM_{i}^{-}$ as the image of $\bM_{i}^{+}$ for the contraction $\bM^{+} \to \bM^{-}$.
\end{definition}

\subsection{The second elementary modification and $\bM^{-}$}\label{ssec:modification}

Recall that $u: \bG_1\lr \bG$ is the blow-up of $\bG$ along the $\PP^1$ parameterizing $(1,0)$-lines in $Q$, and $Y_{10}$ is the exceptional divisor. Let $\cW$ be the cokernel of the universal morphism $\phi$ on $\bG\times Q$ in Section \ref{sub:defq}. Let $\cV:=(u\times \mathrm{id})^*\cW$ be the pull-back of $\cW$ along the map $u\times \mathrm{id}:\bG_1\times Q\lr \bG\times Q$. Then for $([\ell],t)\in Y_{10}$, $\cV|_{([\ell], t) \times Q}$ fits into a non-split exact sequence $\ses{\cO_{\ell}(1)}{\cV|_{([\ell],t)\times Q}}{\cO_Q(1,3)}$. By relativizing it over $Y_{10} \times Q$, we obtain $\ses{\cS}{\cV|_{Y_{10}\times Q}}{\cQ}$. Let $\cV^{-}$ be the elementary modification $\mathrm{elm}_{Y_{10}\times Q}(\cV,\cQ):=\mathrm{ker}(\cV\twoheadrightarrow \cV|_{Y_{10}\times Q}\twoheadrightarrow \cQ)$ along $Y_{10} \times Q$. Note that over $([\ell], t) \in \bG_{1}$, $\cV^{-}|_{([\ell], t)\times Q}$ fits into a non-split exact sequence $\ses{\cO_{Q}(1, 3)}{\cV^{-}|_{([\ell], t)\times Q}}{\cO_{\ell}(1)}$ because the elementary modification interchanges the sub/quotient sheaves. Let $\pi_1: \bG_1\times Q \lr \bG_1$ be the projection into the first factor. Then $\cU^{-} := \pi_{1*}\cV^{-}$ is a rank $10$ bundle over $\bG_1$. Let $\bP^{-} := \PP(\cU^{-})$. 

The following proposition completes the proof of Theorem \ref{thm:mainpropintro}.

\begin{proposition}
The projective bundle $\bP^{-}$ is isomorphic to $\bM^{-}$ in Definition \ref{def:mminus}.
\end{proposition}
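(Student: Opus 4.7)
The plan is to construct a morphism $q^- : \bM^+ \to \bP^-$ by lifting $w$ to $\bG_1$ and adapting the construction of Proposition \ref{prop:mainprop} to the modified universal family $\cV^-$, then to show that $q^-$ factors through $\bM^-$ and that the induced $\bar{q}^- : \bM^- \to \bP^-$ is an isomorphism.

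First I would lift $w : \bM^+ \to \bG$ to $\tilde{w} : \bM^+ \to \bG_1$. Off $\bM_1^+$ the image of $w$ avoids the blow-up center $t(Y_{10}) \cong \PP^1$, yielding the canonical lift. On $\bM_1^+$ each pair $(s, \cO_C(p+q))$ determines the length-$2$ subscheme $Z = \{p, q\}$ on the $(1, 0)$-line $\ell_Z$, and sending $(s, \cO_C(p+q)) \mapsto ([\ell_Z], Z) \in Y_{10}$ selects the normal direction to the blow-up. Since the scheme-theoretic preimage $w^{-1}(t(Y_{10}))$ coincides with the Cartier divisor $\bM_1^+$, the universal property of $u : \bG_1 \to \bG$ produces the required morphism $\tilde{w}$.

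Next, I would identify $\cE' \cong \tilde{w}^* \cV^-$ on $\bM^+ \times Q$ up to twisting by a line bundle pulled back from $\bM^+$. On $\bM^+ \setminus \bM_1^+$, the identification $\cE' \cong w^* \cW$ from the proof of Proposition \ref{prop:mainprop}, combined with $\cV^- = u^* \cW$ away from $Y_{10}$, gives the result. On $\bM_1^+$, which is disjoint from $\bM_2^+$, the restriction $\cE'|_{\bM_1^+} = \cE|_{\bM_1^+}$ has fibers $G = I_{Z, Q}(2, 3)$ realizing the unique non-split extension $0 \to \cO_Q(1, 3) \to G \to \cO_{\ell_Z}(1) \to 0$; this matches the fiber of $\cV^-$ at $\tilde{w}(m) \in Y_{10}$ by the construction of $\cV^-$. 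Following the argument in the proof of Proposition \ref{prop:mainprop} with $\cV^-$ and $\cU^-$ in place of $\cW$ and $\cU$, Grothendieck-Serre duality converts $\cExt_\pi^2(\cE', \omega_\pi) \twoheadrightarrow \cL'$ into a surjection $\tilde{w}^* \cU^{-*} \twoheadrightarrow \cL'$, yielding a morphism $q^- : \bM^+ \to \bP^-$. By Lemma \ref{lem:modified}, on $\bM_2^+ \cong |\cO_Q(2, 3)| \times |\cO_Q(0, 1)|$ the fiber $\cE'|_m$ and the section $s'$ depend only on the second factor $\ell$, so $q^-$ is constant on the $\PP^{11}$-fibers of $\bM_2^+$; the universal property of the Fujiki-Nakano contraction $\bM^+ \to \bM^-$ then gives a factorization $q^- = \bar{q}^- \circ (\bM^+ \to \bM^-)$.

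Finally, $\bar{q}^-$ is a proper birational morphism between smooth projective $13$-folds, so by Zariski's main theorem it is enough to verify bijectivity. Off $\bM_2^-$, the pair $(s', \cE'|_m)$ reconstructs $(s, F)$ via the kernel construction inverse to Lemma \ref{lem:destab}; on $\bM_2^- \cong \PP^1$ distinct $(0, 1)$-lines $\ell$ give distinct extension classes $E_{01} \in \bG_1 \setminus Y_{10}$ lifting to distinct points in $\bP^-$. Surjectivity follows from properness plus birationality. The principal technical obstacle is in step two: upgrading the fiberwise match $\cE'|_m \cong \tilde{w}^* \cV^-|_m$ to a global isomorphism of families requires a universal-property argument for $\bG_1$ analogous to King's representation of $\bG$, together with verification that the differential of $\tilde{w}$ identifies $\cN_{\bM_1^+/\bM^+}$ with $\tilde{w}^* \cN_{Y_{10}/\bG_1}$ along $\bM_1^+$.
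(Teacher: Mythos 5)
Your proposal runs in the opposite direction from the paper's proof: you build a morphism $\bM^{+} \to \bP^{-}$ from a universal family and descend it to $\bM^{-}$, whereas the paper constructs the inverse $\bP^{-} \to \bM^{-}$ by gluing two morphisms on the open cover $\{\bP^{-}\setminus p^{-1}(Y_{10}),\ \bP^{-}\setminus p^{-1}(Y_{01})\}$ --- the first being the inverse of the blow-down $q$ of Proposition \ref{prop:mainprop}, the second coming from the observation that $\cV^{-}$ restricted over $Y_{10}$ is a flat family of twisted ideal sheaves $I_{Z,Q}(2,3)$ and hence maps to the relative Hilbert scheme $\bM^{\infty}$ away from its flipping locus --- and then concludes with the Picard number count $\rho(\bP^{-})=3=\rho(\bM^{-})$ rather than your bijectivity-plus-Zariski argument. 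The difference matters, because the paper's route deliberately sidesteps exactly the step you flag as your ``principal technical obstacle,'' and that step is a genuine gap: you need $\cE' \cong \tilde{w}^{*}\cV^{-}$ (up to twist) \emph{as families} in a neighborhood of $\bM_{1}^{+}$, but the tool you invoke --- ``a universal-property argument for $\bG_{1}$ analogous to King's representation of $\bG$'' --- does not exist. The identification $\cE' \cong w'^{*}\cW$ in the proof of Proposition \ref{prop:mainprop} works precisely because $\bG$ is a fine moduli space of quiver representations and $\cW$ is its universal family; $\bG_{1}$ is merely a blow-up, $\cV^{-}$ is an ad hoc elementary modification, and a fiberwise isomorphism of two flat families over the divisor $\bM_{1}^{+}$ does not globalize for free. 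One can close this (e.g.\ extend the isomorphism from the complement of $\bM_{1}^{+}\times Q$ after twisting by the right power of $\cO(\bM_{1}^{+})$ and use simplicity of the fibers to see the extension is nonzero, hence an isomorphism, on each fiber over $\bM_{1}^{+}$), but that argument is not in your write-up and is the actual content of your approach.

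Two smaller unverified points: (i) your lift $\tilde{w}:\bM^{+}\to\bG_{1}$ via the universal property of the blow-up requires that the inverse image ideal sheaf of the center $u(Y_{10})\cong\PP^{1}$ be the invertible ideal of the Cartier divisor $\bM_{1}^{+}$, which is a scheme-theoretic statement you assert but do not check; a cleaner route, parallel to Lemma \ref{lem:rigidmap}, is to apply the rigidity lemma to the composition $\widetilde{\bM}^{\infty}\to\bH(2)\to\bG_{1}$, which already lands in $\bG_{1}$ before contracting to $\bG$. (ii) Your injectivity check treats $\bM_{2}^{-}$ but is silent on the fibers over $Y_{10}$, where one should note that the $\PP^{9}$-fiber of $\bM_{1}^{-}\to Y_{10}$ over $([\ell],Z)$ (curves in $|\cO_{Q}(2,3)|$ through $Z$) matches $\PP(\cU^{-}|_{([\ell],Z)})=\PP(\rH^{0}(I_{Z,Q}(2,3)))$.
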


\begin{proof}
Since the elementary modification has been done locally around $Y_{10} \times Q$, $\PP(u^{*}\cU)$ and $\bP^-$ are isomorphic over $\bG_1\setminus Y_{10}$. On the other hand, set theoretically, it is straightforward to see that the image of $q$ is $\bP \setminus p^{-1}(t(Y_{10}))$, where $p : \bP \to \bG$ is the structure morphism. So we have a birational morphism $\bM^{+}\setminus \bM_{1}^{+} \to \bP \setminus p^{-1}(t(Y_{10})) \cong \PP (u^{*}\cU) \setminus p^{-1}(Y_{10}) \cong \bP^{-} \setminus p^{-1}(Y_{10})$ (here we used the same notation $p$ for the projections $\PP(u^{*}\cU) \to \bG_{1}$ and $\bP^{-} \to \bG_{1}$). By Proposition \ref{prop:mainprop}, this map is a blow-down along $\bM_{2}^{+}$, thus we have an isomorphism $\tau : \bP^{-}\setminus p^{-1}(Y_{10}) \to \bM^{-}\setminus \bM_{1}^{-}$. So we have a birational map $\tau : \bP^{-} \dashrightarrow \bM^{-}$, where its undefined locus is $p^{-1}(Y_{10})$. 

On the other hand, since the flipped locus for $\bM^{\infty} \dashrightarrow \bM^{+}$ is $\bM_{3}^{+}$, we have an isomorphism $\bM^{-}\setminus (\bM_{2}^{-}\cup \bM_{3}^{-}) \cong \bM^{+}\setminus (\bM_{2}^{+}\cup \bM_{3}^{+}) \cong \bM^{\infty} \setminus (\bM_{2}^{\infty} \cup \bM_{3}^{\infty})$ (Here $\bM_{i}^{\infty}$ is defined in an obvious way.). Also $\tau^{-1}(\bM_{2}^{-}\cup \bM_{3}^{-}) = p^{-1}(Y_{01})$. Hence if we restrict the domain of $\tau$, then we have $\sigma : \bP^{-}\setminus p^{-1}(Y_{01}) \dashrightarrow \bM^{-}\setminus (\bM_{2}^{-}\cup \bM_{3}^{-}) \cong \bM^{\infty}\setminus (\bM_{2}^{\infty}\cup \bM_{3}^{\infty})$ whose undefined locus is $p^{-1}(Y_{10})$. Therefore $\sigma$ can be regarded as a map into a relative Hilbert scheme. Note that $\bM_{2}^{\infty} \cup \bM_{3}^{\infty}$ is the locus of $(2, 3)$-curves passing through two points lying on a $(0, 1)$-line. 

We claim that $\sigma$ is extended to a morphism $\tilde{\sigma} : \bP^{-}\setminus p^{-1}(Y_{01}) \to \bM^{-}$ such that $\tilde{\sigma}(p^{-1}(Y_{10})) = \bM_{1}^{-} \cong \bM_{1}^{\infty}$. To show this, it is enough to check that $\cV^{-}$ over $Y_{10}$ provides a flat family of the twisted ideal sheaf of Hilbert scheme of two points lying on $(1,0)$-type lines. Note that $\cV^{-}$ fits into a non-split extension $\ses{\cO_Q(1,3)}{\cV^{-}|_{([\ell], t)\times Q}}{\cO_{\ell}(1)}$. By a diagram chasing similar to the second paragraph of the proof of Lemma \ref{lem:destab}, one can check that $\cV^{-}|_{([\ell], t)\times Q} \cong I_{Z, Q}(2, 3)$ where $Z\subset \ell$ and $\ell$ is a $(1, 0)$-line.

Now two maps $\tau$ and $\tilde{\sigma}$ coincide over the intersection $\bP^{-} \setminus p^{-1}(Y_{10} \cup Y_{01})$ of domains, so we have a birational morphism $\bP^{-} \to \bM^{-}$. Since $\rho(\bP^{-}) = 3 = \rho(\bM^{-})$ and both of them are smooth, this map is an isomorphism. 
\end{proof}

The modification on $\bG_1\times Q$ descends to $\bG_1$. Then Proposition \ref{prop:downstairmodificaiton} follows from a general result of Maruyama (\cite{Mar73}).

\begin{proof}[Proof of Proposition \ref{prop:downstairmodificaiton}]
Let $\pi_{1} : \bG_{1} \times Q \to \bG_{1}$ be the projection. We claim that $\cU^{-} = \mathrm{elm}_{Y_{10}}(u^{*}\cU, \pi_{1*}\cQ) \cong \pi_{1 *}\mathrm{elm}_{Y_{10}\times Q}(\cV, \cQ)$. Indeed, from $\ses{\cV^{-}}{\cV}{\cQ}$, we have $0\to\pi_{1 *}\cV^{-}\to\pi_{1 *}\cV = u^{*}\cU \to \pi_{1 *}\cQ \to R^{1}\pi_{1 *}\cV^{-}\to R^{1}\pi_{1 *}\cV$. It is sufficient to show that $R^{1}\pi_{1 *}\cV^{-} = 0$. By using the resolution of $\cV$ given by the universal morphism $\phi$, we have $R^{1}\pi_{1 *}\cV = 0$. Over $\bG_{1} \setminus Y_{10}$, the last two terms are isomorphic. Over $Y_{10}$, from $\rH^{1}(\cO_{Q}(1, 3)) = \rH^{1}(\cO_{\ell}(1)) = 0$ and the description of $\cV^{-}|_{([\ell], t)}$, we obtain $R^{1}\pi_{1 *}\cV^{-} = 0$. 

Note that $u^{*}\cU|_{Y_{10}}$ fits into a \emph{vector bundle} sequence $\ses{\pi_{1 *}\cS}{u^{*}\cU|_{Y_{10}}}{\pi_{1 *}\cQ}$ and $\mathrm{rank}\; \pi_{1 *}\cS = 2$ and $\mathrm{rank}\;\pi_{1 *}\cQ = 8$. The result follows from \cite[Theorem 1.3]{Mar73}.
\end{proof}

As a direct application of Theorem \ref{thm:mainpropintro}, we compute the Poincar\'e polynomial of $\bM$ which matches with the result in \cite[Theorem 1.2]{Mai16}.
\begin{corollary}\label{coro}
\begin{enumerate}
\item The moduli space $\bM$ is rational;
\item The Poincar\'e polynomial of $\bM$ is 
\[
P(\bM) =\; q^{13}+3q^{12}+8q^{11}+10q^{10}+11 q^{9}+11 q^{8}+11 q^{7}+11 q^{6}+11q^5+11q^4+10q^3+8q^2+3q+1.
\]
\end{enumerate}
\end{corollary}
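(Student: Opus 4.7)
The plan is to compute everything by running the blow-up and projective bundle formulas through the diagram established in Theorem~\ref{thm:mainpropintro} and Proposition~\ref{prop:downstairmodificaiton}, starting from the Grassmannian and moving towards $\bM$. Rationality will be immediate from the same diagram, so I will treat (1) first and then turn to (2).

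\textbf{Rationality.} The Grassmannian $\bG = \mathrm{Gr}(2,4)$ is rational (it is a classical fact that a Grassmannian contains a dense open affine cell). The map $u : \bG_1 \to \bG$ is a smooth blow-up along a $\PP^1$, so $\bG_1$ is birational to $\bG$ and therefore rational. Since $\bP^- = \PP(\cU^-)$ is a projective bundle over $\bG_1$, it is again rational (projective bundles over rational bases are rational). The proof of Theorem~\ref{thm:mainpropintro} identifies $\bP^-$ with $\bM^-$, and the master space $\bM^+$ is obtained from either $\bM$ or $\bM^-$ by a smooth blow-up along a subvariety; in particular $\bM$ is birational to $\bP^-$. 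Therefore $\bM$ is rational.

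\textbf{Poincar\'e polynomial.} I will compute with the convention that $q$ tracks $H^{2\bullet}$, so that $P(\PP^n) = 1 + q + \cdots + q^n$. Start with $P(\bG) = 1 + q + 2q^2 + q^3 + q^4$. Since $\bG_1$ is the blow-up of $\bG$ along a $\PP^1$ of codimension $3$, the blow-up formula
\[
P(\mathrm{Bl}_Z X) = P(X) + P(Z)(q + q^2 + \cdots + q^{c-1})
\]
yields $P(\bG_1) = P(\bG) + (1+q)(q + q^2) = 1 + 2q + 4q^2 + 2q^3 + q^4$. Because $\bP^-$ is a $\PP^9$-bundle over $\bG_1$, the projective bundle formula gives $P(\bP^-) = P(\bG_1)\cdot(1 + q + \cdots + q^9)$, which expands to
\[
P(\bP^-) = 1 + 3q + 7q^2 + 9q^3 + 10q^4 + 10q^5 + \cdots + 10q^9 + 9q^{10} + 7q^{11} + 3q^{12} + q^{13}.
\]
By the isomorphism $\bP^- \cong \bM^-$, this is $P(\bM^-)$.

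\textbf{Transferring from $\bM^-$ to $\bM$.} Section~\ref{ssec:modsheaves} tells us that $\bM_2 \cong |\cO_Q(2,3)| = \PP^{11}$ sits in $\bM$ in codimension $2$, and the discussion after Proposition~\ref{prop:mainprop} identifies $\bM^+$ as the blow-up $\mathrm{Bl}_{\bM_2}\bM$ (this is Maican's result recalled in Section~\ref{ssec:moduliofpairs}). Meanwhile, by Definition~\ref{def:mminus}, $\bM^+ \to \bM^-$ is a smooth blow-up along $\bM_2^- \cong \PP^1$ of codimension $12$. Equating the two resulting expressions for $P(\bM^+)$ gives
\[
P(\bM) + q\cdot P(\PP^{11}) = P(\bM^-) + P(\PP^1)\cdot (q + q^2 + \cdots + q^{11}),
\]
so that
\[
P(\bM) = P(\bM^-) + (1+q)(q + \cdots + q^{11}) - q(1 + q + \cdots + q^{11}) = P(\bM^-) + (q^2 + q^3 + \cdots + q^{11}).
\]
Adding $q^2 + q^3 + \cdots + q^{11}$ to $P(\bP^-)$ above yields exactly the polynomial stated in the corollary. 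The main (and really only) obstacle is bookkeeping: verifying that the codimensions, the identification of $\bM_2$ with $\PP^{11}$, and the fact that $\bM^+ \to \bM$ is a genuine smooth blow-up (not just a contraction) are correctly imported from Section~\ref{sec:relevantmoduli} and from Proposition~\ref{prop:mainprop}; once these inputs are in place, the formulas combine without difficulty.
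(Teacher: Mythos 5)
Your proposal is correct and follows essentially the same route as the paper: the paper's one-line computation $P(\bM) = P(\PP^{11}) - P(\PP^1) + P(\PP^9)\bigl(P(\bG) + (P(\PP^2)-1)P(\PP^1)\bigr)$ is exactly your chain of blow-up and projective-bundle formulas (the two blow-up descriptions of $\bM^+$ giving $P(\bM) - P(\bM^-) = P(\PP^{11}) - P(\PP^1)$, and $P(\bM^-) = P(\bP^-) = P(\PP^9)P(\bG_1)$), and the rationality argument via the birational projective bundle over a rational base is the paper's as well. Your version just makes the bookkeeping explicit.
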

\begin{proof}
Now $\bM$ is birational to a $\PP^9$-bundle over $\bG$, so we obtain Item (1). Item (2) is a straightforward calculation using
\[
P(\bM) =\; P(\PP^{11})-P(\PP^1)+P(\bM^-) =P(\PP^{11})-P(\PP^1)+P(\PP^{9})(P(\bG)+(P(\PP^2)-1)P(\PP^1)).
\]
\end{proof}




\bibliographystyle{alpha}
\newcommand{\etalchar}[1]{$^{#1}$}

\end{document}